\def\eqref#1{equation~\ref{#1}}
\def\1{\bm{1}}
\def\eps{{\varepsilon}}
\DeclareMathAlphabet{\mathsfit}{\encodingdefault}{\sfdefault}{m}{sl}
\SetMathAlphabet{\mathsfit}{bold}{\encodingdefault}{\sfdefault}{bx}{n}
\DeclareMathOperator*{\argmin}{arg\,min}
\definecolor{darkmidnightblue}{rgb}{0.0, 0.2, 0.4}
\definecolor{darkpowderblue}{rgb}{0.0, 0.2, 0.6}
\definecolor{dukeblue}{rgb}{0.0, 0.0, 0.61}
\tikzset{
    my/.style={
        draw=green,thick,fill=white,circle,minimum width=.5cm
    },
    >=latex
}
\def\LIPERM1{\hyperlink{LIPERM}{\textup{\sf{LIPERM}}}}
\newcommand{\AssA}[2]{\hyperlink{AssA}{\textup{\textsf{A}}}$({#1},{#2})$}
\def\hat{\widehat}
\def\rmd{{\rm d}}
\def\Id{\operatorname{\sf Id}}
\renewcommand{\leq}{\leqslant}
\renewcommand{\geq}{\geqslant}
\renewcommand{\le}{\leqslant}
\renewcommand{\ge}{\geqslant}
\def\bU{\boldsymbol{U}}
\def\bu{\boldsymbol{u}}
\def\bx{\boldsymbol{x}}
\def\bX{\boldsymbol{X}}
\def\bZ{\boldsymbol{Z}}
\def\wass{{\sf W}}
\newtheorem{proposition}{Proposition}
\newtheorem{theorem}{Theorem}
\newtheorem*{theorem*}{Theorem}
\newtheorem{lemma}{Lemma}
\newtheorem{corollary}{Corollary}
\theoremstyle{definition}
\theoremstyle{remark}
\newtheorem{remark}{Remark}
\newtheorem*{consequence*}{Consequence}
\icmltitlerunning{Generative Modeling with Maximum Deviation from the Empirical Distribution}
\begin{document}

\twocolumn[
\icmltitle{Statistically Optimal Generative Modeling with Maximum Deviation from the Empirical Distribution}

% It is OKAY to include author information, even for blind
% submissions: the style file will automatically remove it for you
% unless you've provided the [accepted] option to the icml2024
% package.

% List of affiliations: The first argument should be a (short)
% identifier you will use later to specify author affiliations
% Academic affiliations should list Department, University, City, Region, Country
% Industry affiliations should list Company, City, Region, Country

% You can specify symbols, otherwise they are numbered in order.
% Ideally, you should not use this facility. Affiliations will be numbered
% in order of appearance and this is the preferred way.
\icmlsetsymbol{equal}{*}

\begin{icmlauthorlist}
\icmlauthor{Elen Vardanyan}{YSU,equal}
\icmlauthor{Sona Hunanyan}{YSU,equal}
\icmlauthor{Tigran Galstyan}{YSU,YNN,equal}
\icmlauthor{Arshak Minasyan}{CREST}
\icmlauthor{Arnak Dalalyan}{CREST}
%\icmlauthor{}{sch}
\end{icmlauthorlist}

\icmlaffiliation{YNN}{YerevaNN, Armenia}
\icmlaffiliation{YSU}{Department of Mathematics, Yerevan State University (YSU), Armenia}
\icmlaffiliation{CREST}{CREST, GENES, Institut Polytechnique de Paris, France}

\icmlcorrespondingauthor{Elen Vardanyan}{evardanyan@aua.am}
\icmlcorrespondingauthor{Arnak Dalalyan}{arnak.dalalyan@ensae.fr}

% You may provide any keywords that you
% find helpful for describing your paper; these are used to populate
% the "keywords" metadata in the PDF but will not be shown in the document
\icmlkeywords{generative models, Wasserstein GAN, minimax rate, sample complexity}

\vskip 0.3in
]

% Authors must not appear in the submitted version. They should be hidden
% as long as the \iclrfinalcopy macro remains commented out below.
% Non-anonymous submissions will be rejected without review.

% The \author macro works with any number of authors. There are two commands
% used to separate the names and addresses of multiple authors: \And and \AND.
%
% Using \And between authors leaves it to \LaTeX{} to determine where to break
% the lines. Using \AND forces a linebreak at that point. So, if \LaTeX{}
% puts 3 of 4 authors names on the first line, and the last on the second
% line, try using \AND instead of \And before the third author name.

\newcommand{\fix}{\marginpar{FIX}}
\newcommand{\new}{\marginpar{NEW}}

%\iclrfinalcopy % Uncomment for camera-ready version, but NOT for submission.
\printAffiliationsAndNotice{\icmlEqualContribution}

\begin{abstract}
This paper explores the problem of generative modeling, aiming to simulate diverse examples from an unknown distribution based on observed examples. While recent studies have focused on quantifying the statistical precision of popular algorithms, there is a lack of mathematical evaluation regarding the non-replication of observed examples and the creativity of the generative model. We present theoretical insights into this aspect, demonstrating that the Wasserstein GAN, constrained to left-invertible push-forward maps, generates distributions that not only avoid replication but also significantly deviate from the empirical distribution. Importantly, we show that left-invertibility achieves this without compromising the statistical optimality of the resulting generator. Our most important contribution provides a finite-sample lower bound on the Wasserstein-1 distance between the generative distribution and the empirical one. We also establish a finite-sample upper bound on the
distance between the generative distribution and the true data-generating one. Both bounds are explicit and show the impact of
key parameters such as sample size, dimensions of the ambient and latent spaces, noise level, and smoothness measured by the Lipschitz constant.
\end{abstract}

\addtocontents{toc}{\protect\setcounter{tocdepth}{0}}

\section{Introduction}

Generative modeling is a widely-used machine learning technique that has found applications in various scientific and industrial domains, including health \citep{yan2018deeplesion,nie2017medical}, climate \citep{gagne2020machine}, finance \citep{wiese2020quant}, energy \citep{fekri2019generating}, physics \citep{paganini2018calogan}, chemistry \citep{maziarka2020mol}, and biology \citep{repecka2021expanding}. The primary goal of generative models is to simulate new examples by learning from training data, while ensuring diversity and avoiding the replication of examples from the training set.

Assessing the performance of a generative model can be done qualitatively by evaluating the realism of the generated examples, which we refer to as accuracy. However, accuracy should be balanced with another crucial property: the diversity of generated examples and their difference from the training examples. This property, referred to as the generator's creativity \cite{li2024good}, is essential to avoid overfitting (producing examples that are slight modifications of those in the training set). Qualitative evaluation of diversity is challenging due to the large size of training sets, making it impossible to retain all the examples they contain. Nonetheless, diversity is as important as accuracy, particularly in applications where generative models aim to enrich datasets in cases where data acquisition is expensive or infeasible. Generating examples that closely resemble the observed data diminishes the utility of such algorithms.

The success of deep neural nets in generative modeling has attracted significant attention from the machine learning community. The number of proposed methods in recent years, following the influential work by \cite{goodfellow2014generative}, is extensive, making it impractical to cite all of them here\footnote{For a comprehensive list, refer to \url{https://github.com/hindupuravinash/the-gan-zoo}}. While many of these methods have been empirically validated and justified using heuristics, a more comprehensive mathematical quantification of their strengths and limitations is often lacking.

The importance of diversity in generated examples is well-acknowledged but presents practical challenges. Numerous papers have empirically studied the issue of limited diversity in learned distributions, proposing some solutions \cite{DumoulinBPLAMC17,Arora0LMZ17,AroraRZ18,VEEGAN}. However, the majority of studies on diversity have primarily targeted the mitigation of mode collapse. This phenomenon occurs when certain examples in the training/testing set are  overlooked by the learned distribution; as a result, some test set examples are markedly dissimilar to those generated by the
learned distribution.

In this paper, we explore another aspect of diversity within the learned distribution. We aim to ensure that the examples it generates are not mere copies of the samples in the training set but rather novel instances. To achieve this, we seek to understand to what extent the learned distribution can deviate from the empirical distribution of observations while still closely adhering to the true underlying law.

From a theoretical standpoint, endeavors to address ``mode collapse'' primarily involve proposing methodological enhancements that aim to achieve a better precision, where the precision is understood as the closeness of the learned distribution (in Wasserstein or KL, for instance) to the true underlying law. The idea is that if the learned distribution closely aligns with the true distribution in these metrics, it is less likely to miss important modes of the true distribution. However, none of these approaches quantifies the dissimilarity between the learned distribution and the empirical distribution of the training set. This is a critical consideration, as the training process typically involves fitting the empirical distribution with a parametric class. The ultimate goal, however, is to generate examples that differ from those in the training set.

\textbf{Contributions} The key insight of this paper is the following: if the learned distribution is defined as the push-forward of the uniform distribution by a smooth map, then ensuring the push-forward map has a smooth left inverse prevents overfitting to the empirical distribution and promotes creativity. Moreover, when left invertibility is imposed on an already statistically optimal generator, it seems to retain its optimality. This claim, though speculative, is supported by our study of Wasserstein GANs (WGAN) \citep{pmlr-v70-arjovsky17a, GulrajaniAADC17}. We introduce LIPERM (Left-Inverse Penalized Empirical Risk Minimizer), a penalized version of WGAN that favors left invertibility of the push-forward map. Our main result establishes a lower bound on the Wasserstein-1 distance between the learned distribution and its empirical counterpart. We then establish
an upper bound on the precision measured by an integral probability metrics. It takes the form of a finite-sample
risk bound describing the behavior of the learned generator
as a function of the sample size $n$ and the dimension $d$
of the latent space. Importantly, these bounds are
independent of the ambient dimension $D$ and are
rate-optimal, as confirmed by lower bounds in \citep{schreuder_brunel_dalalyan_2021, tang2022minimax}.

When the latent dimension $d\geq 2$, we prove that LIPERM's separation from any distribution concentrated on the training sample is at least of order $n^{-1/d}$, while its precision  is established to be at most of order $n^{-1/d}$. Notably, $n^{-1/d}$ is the rate of approximating the true data-generating distribution with its empirical counterpart \citep{dudley_1969, Boissard, weed2022estimation}.
Consequently, a generative model separated from the empirical distribution by a distance larger than $n^{-1/d}$ would
have suboptimal precision. Thus, LIPERM, being sufficiently distant from distributions replicating observed examples, ensures diverse and novel example generation.

In \Cref{fig:1}, we illustrate the explored framework on the task of generating points on a 2D spiral using a 1D latent space. In the left panel,
the map $g: [0,1] \to \mathbb{R}^2$ corresponds to a generator with high diversity and a small left-inverse penalty (LIP);  the generated examples exhibit a wide range of variations and deviations from the training distribution. In the second panel, the map $g$ is inferior to the first one in terms of diversity, resulting in less varied generated examples, and it has a higher LIP. A closer examination reveals that for certain points on the spiral that are close to each other, their preimages under $g$ are located far apart. This means that the left-inverse of $g$ has a large Lipschitz norm.
Finally, in the rightmost panel, $g$ generates only a few
examples, producing a restricted set of outputs. It has a
LIP equal to $+\infty$. This illustration highlights the
connection between diversity and the left-inverse penalty.

\begin{figure*}
    \centering
    \includegraphics[width = 0.3\textwidth]{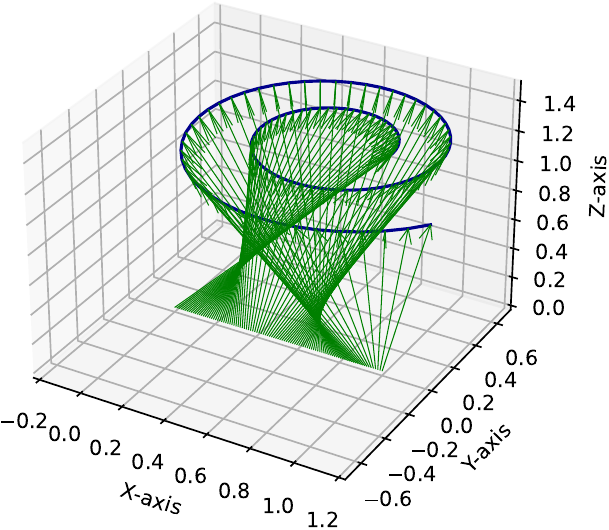}
    \includegraphics[width = 0.3\textwidth]{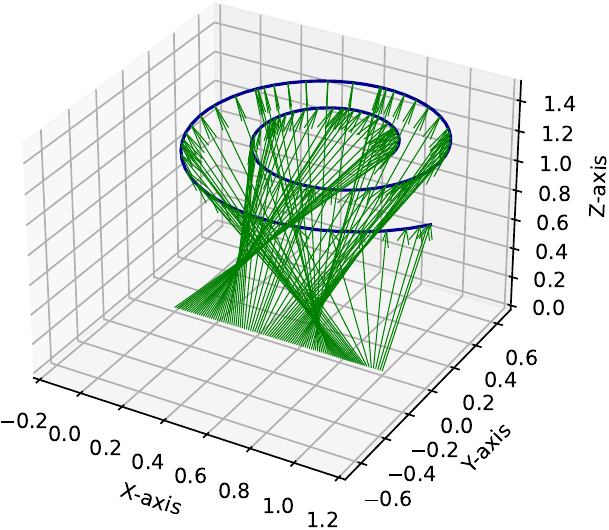}
    \includegraphics[width = 0.3\textwidth]{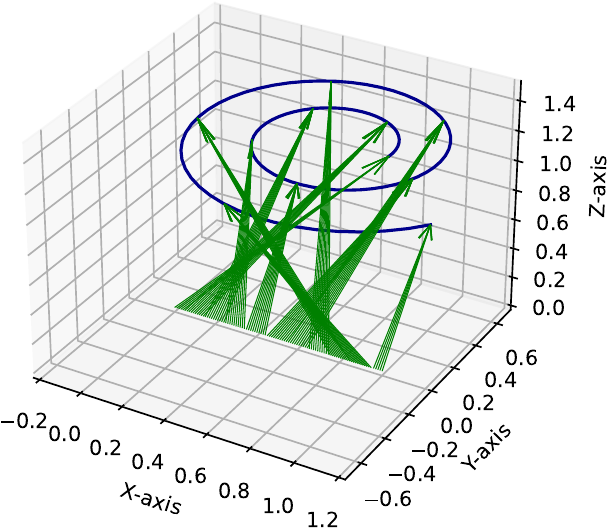}
    \caption{Illustration of the framework of this paper: generating points on a 2D spiral using a 1D latent space. The green arrows represent the mapping $g:[0,1]\to \mathbb R^2$. Each arrow indicates how points from the latent space are mapped to positions in the 2D spiral.}
    \label{fig:1}
\end{figure*}

\textbf{Prior work}
In recent years, a notable surge in papers has focused on mathematical aspects of generative models. Some treat generative models as tools
for distribution and density estimation, establishing their optimality
\citep{Liang, belomestny2023rates, BiauST21, Biau, uppal2019nonparametric,kwon2023minimax,chae2023likelihood}.
The convergence rate of adversarial generative models---under the
manifold assumption---independent of the ambient space dimension,
is highlighted in \citep{huang_etal_2022, schreuder_brunel_dalalyan_2021,
tang2022minimax, stéphanovitch2023wasserstein} using integral
probability metrics. The intricate relationship between minimax
optimality and distribution learning is explored in \citep{chen2022minimax}.

The analysis of diffusion-based generative models is presented in \citep{Bortoli1,debortoli2022riemannian}, while the investigation of autoencoders and their relation to the Langevin process is carried
out in \cite{Block}.
A regularization scheme for training GANs, based on adding a penalty
on the weighted gradient-norm of the discriminator, is introduced in
\citep{roth_gan_regulariz}; see also \citep{petzka2018on}. The
theoretical characterization of the mode-seeking behavior of general
$f$-divergences and Wasserstein distances, along with a guarantee
for mixtures, is provided in \citep{pmlr-v206-ting-li23a}. In the
context of generator invertibility and mode collapse in GANs, \citep{bai_ma_risteski_2019} suggest that invertible generators
might effectively alleviate mode collapse. \cite{xi_reddy_2023}
propose a theoretical framework for analyzing the indeterminacies
of latent variable models.

The issue of generative models memorizing training data and potentially copying from it has been emphasized in early works such as \citep{nagarajan2018theoretical} and \citep{GulrajaniRM19}. Recent observations have further revealed instances of duplicated training examples, particularly in diffusion models for image generation and large language models \citep{somepalli2023diffusion, somepalli2023understanding, daras2023ambient, carlini2021extracting, carlini2023extracting, jagielski2023measuring}. Advancing theoretical understanding in this realm could contribute to the development of algorithms mitigating the risk of replication.

\textbf{Notation}
For every integer $d>1$, we denote by $\mathcal{U}_d$
the uniform distribution on $[0,1]^d$. The norm $\|\bx\|$ of an element $\bx$ from an Euclidean space
is always the Euclidean norm. We denote by $\mathbb E[\bX]$ the expectation of a random variable. If necessary, we write $\mathbb E_P[\bX]$ to stress
that the expectation is considered under the condition that $\bX$ is drawn from $P$.
For a random vector $\bX$ and a real number $q\geq 1$,  we use the notation $\|\bX\|_{\mathbb L_q} = \mathbb E^{1/q}[\|\bX\|^q]$.
For two subsets $A$ and $B$ of some Euclidean spaces,
and a positive number $L$, we say that a function
$f:A\to B$ is $L$-Lipschitz-continuous, if
$\|f(\bx) - f(\bx')\|\leq L\|\bx - \bx'\|$ for every
$\bx,\bx'\in A$. The set of all the $L$-Lipschitz
continuous functions from $A$ to $B$ is denoted by
$\textup{Lip}_L(A\to B)$. The Dirac mass at a point $\bx$ is denoted by $\delta_{\bx}$. The notation
$P_{n,Z}$ is often used to design the empirical
distribution $(1/n)\sum_{i=1}^n\delta_{\bZ_i}$
($Z$ can be replaced by other letters).
We set ${\sf V}_d = \pi^{d/2}/\Gamma(1+d/2)$
to be the volume of the unit ball. Notation
$\Id_d$ stands for the identity mapping $\Id_d(\bx)
=\bx$ on $\mathbb R^d$, or any subset of it. For two
sets of functions $\mathcal G$ and $\mathcal H$, we
set $\mathcal G_{\mathcal H}$ the subset of
$\mathcal G$ the elements of which admit a left
inverse function in $\mathcal H$.

\section{Left-Inverse-Penalized Empirical Risk}\label{sec:2}

Let $P^*$ be the distribution of training data $\bX_1,\ldots,\bX_n$ in the $D$-dimensional Euclidean space $\mathbb R^D$ equipped with the Borel $\sigma$-algebra $\mathscr B(\mathbb R^D)$. While $D$ is typically large, we assume that the examples $\bX_i$ may originate from latent variables in a lower-dimensional space of dimension $d$.
This is closely related to the so-called ``manifold
assumption'' \citep{fefferman2016,NarayananM10}. To capture
this, several popular algorithms such as GAN or WGAN,
choose an integer $d>0$ much smaller than $D$ and seek to learn a generative distribution $\hat P_n$ as a smooth
transformation of the uniform distribution in $[0,1]^d$.
This is the setting considered in this paper: the trained
generative distribution is chosen of the form $P_g = g\sharp
\mathcal\,\mathcal U_d$, where $g:[0,1]^d\to\mathbb R^D$ is
called the push-forward map and $P_g$ defined by
$P_g(A) = P(g^{-1}(A))$, $\forall A\in\mathscr B(\mathbb
R^D)$, is the push-forward distribution.

In this paper, we study learned distributions
obtained by minimizing the penalized empirical risk using a suitable penalty. Let $\mathcal{H}$ and $\mathcal G$ be two functional classes such that
\begin{align}
    \mathcal H&\subset \text{Lip}_{L_{\mathcal H}}([0,1]^D\to[0,1]^d),\qquad \\ \mathcal{G}&\subset \text{Lip}_{L_{\mathcal G}}([0,1]^d\to[0,1]^D).
\end{align}
Let $\sf d$ be a distance on the space
of probability distributions. For $q\geqslant 1$, we define the
left inverse penalty
\begin{align}
    \text{pen}_{\mathcal H}(g) &= \min_{h\in \mathcal{H}}
    \int_{[0,1]^d} \| h \circ g(\bu) - \bu\|^q \rmd\bu\\
    &= \min_{h\in \mathcal{H}} \|h\circ g -
    \Id_d\|_{\mathbb L_q}^q. \label{LIP}\tag{LIP}
\end{align}
We then define the penalized empirical risk
\begin{align}\label{penL}
    \hat L^{{\sf d}, \mathcal{H}}_n(g) = {\sf d}(g\sharp\, \mathcal{U}_d, P_{n,X}) + \lambda\, \text{pen}_{\mathcal H} (g)
\end{align}
for a tuning parameter $\lambda>0$.  The
learned generator is the push-forward distribution $\widehat{g}_n\sharp\mathcal U_d$, with $\hat g_n = \hat g_n(\lambda,\mathcal{G},\sf d,\mathcal{H})$ being a solution to the \hypertarget{LIPERM}{minimization problem}
\begin{align}\label{LIPERM}
\tag{LIPERM}
    \widehat{g}_{n} \in \argmin_{g \in \mathcal{G}}
     \hat L^{{\sf d}, \mathcal{H}}_n(g).
\end{align}
The choices of the distance $\sf d$ and of  $\lambda$ are important. For $\sf d$ we will mainly use the Wasserstein-1
distance $\wass_1$ or a more general integral probability
metric (IPM) defined by
\begin{align}\label{IPM}\tag{IPM}
    {\sf d}_{\mathcal F}(P,Q) =
    \sup_{f\in\mathcal F} \big|
    \mathbb E_P[f(\bX)] - \mathbb E_Q[f(\bX)]\big|,
\end{align}
where $\mathcal F$ is a set of test functions
$\mathbb R^D\to\mathbb R$, and  $P,Q$  are two
probability measures on $([0,1]^D,\mathscr{B}([0,1]^D))$.
The Wasserstein-$1$ distance corresponds to an IPM
with $\mathcal{F}$ being the set of all $1$-Lipschitz-continuous functions.

As for $\lambda$, the case $\lambda = 0$ corresponds to
the standard setting of minimal distance estimator
including GAN, WGAN, and their variants.
The other extreme case $\lambda =+\infty$ corresponds
to minimizing the training error ${\sf d}(g\sharp\, \mathcal{U}_d, P_{n,X})$ under the constraint
$g\in\mathcal G_{\mathcal H}$. Although this
constrained estimator has some attractive properties,
its computation might be more challenging than that
of penalized one.

Note that the sets $\mathcal{G}$ and $\mathcal{H}$ are entirely determined by the user's choice, as are $\sf d$
and $\lambda$. We refer to $\hat g_n$ defined by (\ref{LIPERM}) as the {left-inverse-penalized empirical risk minimizer}. Although most of our results will refer to the
general form (\ref{penL}) of the loss function, it might be
useful for the reader to keep in mind the central example
corresponding to the case  $\sf d = \wass_1$ and
$q=2$ leading to the min-max problem
\begin{align}\label{penL2}
    \min_{g\in\mathcal G}\max_{h\in\mathcal H}\Big\{
    \wass_1(g\sharp\,
    \mathcal{U}_d, P_{n,X}) + \lambda\,
    \int\|h\circ g(\bu) - \bu\|^2\,\bu\Big\}.
\end{align}
The rationale behind considering this learning procedure is as follows: When training a generative model, the objective is to produce a distribution that (a) is easy to sample from,  (b) is close to the distribution of the training data, and (c)  avoids replicating the examples in the training dataset. The cost function in (\ref{LIPERM}) consists of two terms, each contributing to one of the last two desired properties. If the term ${\sf d}(g\sharp\, \mathcal{U}_d, P_{n,X})$ is small, the generator $g\sharp\mathcal U_d$ closely approximates the empirical distribution. Additionally, if $\text{pen}_{\mathcal H}(g)$ is small,  $g$ is nearly invertible with a smooth inverse. Intuitively, when $g$ possesses a smooth inverse, it disperses the unit hypercube $[0,1]^d$ across a large region in $[0,1]^D$ rather than concentrating around a small neighborhood of the observations from the
training set. In particular, the following simple fact holds
true.

\begin{lemma}\label{lem:1a}
    Any distribution $P_g = g\sharp\,\mathcal U_d$
    defined by a push-forward map $g\in \mathcal
    G_{\mathcal H}$ has no atom. In particular, it
    satisfies $P_g(\{\bX_1,\ldots,\bX_n\}) = 0$.
\end{lemma}

This lemma implies, in particular, that if the learned distribution is defined by \ref{LIPERM} with
$\lambda=+\infty$, then the
probability of generating an example that was present in
the training set is equal to zero.
\begin{proof}[Proof of Lemma~\ref{lem:1a}]
    Clearly, for any $\boldsymbol{x}\in\mathbb R^D$,
    $\hat P_n(\{\bx\}) = \textup{Leb}_d\big(
    g^{-1}(\bx) \big)$. Since $g$ has a left inverse,
    $g^{-1}(\bx)$ contains at most $1$ point, resulting
    in a Lebesgue measure of zero.
\end{proof}

Motivated by practical convenience, instead of an exact solution of \ref{LIPERM}, we will consider an
$\varepsilon$-approximate solution satisfying
\begin{equation}
\hat L^{{\sf d}, \mathcal{H}}_n(\widehat{g}_{n, \eps})
\leq \min_{g \in \mathcal{G}}
     \hat L^{{\sf d}, \mathcal{H}}_n(g) + \eps,
     \label{LIPERMe} \tag{LIPERMe}
\end{equation}
where $\epsilon>0$ is a small number. It is clear that
\begin{align}
    L^{\sf d,\mathcal{H}}(\widehat{g}_{n}) \leq L^{\sf d\mathcal{H}}(\widehat{g}_{n, \eps}) \leq L^{\sf d,\mathcal{H}}(\widehat{g}_{n}) + \eps.
\end{align}
The subsequent sections aim to mathematically characterize the properties of (\ref{LIPERMe}).

The left-inverse penalty in our work is similar to methods in deep learning, such as variational autoencoders \citep{kingma_welling_2013} and Cycle-GAN \citep{zhu_park_isola_efros_2017}. Variational autoencoders  learn a compact latent representation and generate new examples by sampling from the learned latent space, using the penalty
$\|g\circ h - \Id_D\|_{\mathbb L_2}$ instead of (\ref{LIP}). Cycle-GAN  focuses on style transfer and image-to-image translation, enforcing cyclic consistency similar to our LIP.
Specifically, Cycle-GAN aims to satisfy the conditions \( F \circ G(x) \approx x \) and \( G \circ F(y) \approx y \) using two neural networks that perform style transfer in opposite directions.

The resemblance between our left-inverse penalty and methods yielding favorable empirical results suggests that minimizing the penalized empirical risk, though challenging, is feasible. We leverage this resemblance in the accompanying implementation, leading to experimental results reported in \Cref{sec:6new}.

\section{Main Result: Deviation from the Empirical
Distribution}\label{sec:4}

If the class $\mathcal G$ is rich, it is likely to contain
a function $\hat g$ that overfits the training data:
the distance $\wass_1(\hat g\sharp\,
\mathcal U_d,P_{n,X})$ might be very small or even zero.
This type of overfitting has been observed in practice,
as highlighted in \citep{somepalli2023diffusion, somepalli2023understanding, daras2023ambient, carlini2021extracting, carlini2023extracting, jagielski2023measuring}.
This behavior is undesirable for most generative modeling applications, such as image or music generation. Simply resampling examples from the training set is not the intended outcome.

The main finding in this paper is that overfitting to the empirical distribution can be mitigated or substantially restrained by imposing constraints on admissible generators, specifically requiring them to have a smooth left inverse. This holds true, particularly for the learned distribution defined by (\ref{LIPERM}) with $\lambda = \infty$, referred to as the hard constraint case. In this setting, we establish that the learned distribution is significantly distant from the empirical distribution. Furthermore, we extend this finding to generators that are nearly left invertible, such as the generator defined by (\ref{LIPERMe}) with a $\lambda>0$.

To demonstrate that the learned generator,
$\hat g_n$, does not replicate the examples from the
training set, we examine the distance between the
probability measure induced by the generator, $\hat g_n\sharp\,\mathcal{U}_d$, and any distribution
$Q$ satisfying $Q(\{\bX_1,\ldots,\bX_n\}) = 1$. A
larger distance indicates a greater dissimilarity,
which is desirable.

\subsection{Warm-up: The Case of Hard Constraint   ($\lambda=\infty$) }

We first consider the generator $\hat g_n$ obtained by imposing the hard constraint $h\circ g = \text{Id}_d$ on feasible solutions. This means that $\hat g_n$ minimizes
the distance, ${\sf d}$, between $g\sharp\, \mathcal{U}_d$
and the empirical distribution $P_{n,X}$, over the set
of all $g\in \mathcal{G}$ for which there exists $h\in \mathcal H$ satisfying $h\circ g = \Id_d$.

While our main results apply to more general IPMs,
we primarily focus on the $\wass_1$ distance. This choice of $\sf d$ is particularly relevant due to its interpretation as an optimal transport distance. It has found successful applications in various fields, such as computer vision, economics and biology \citep{Ollivier_Pajot_Villani_2014,PeyreCuturi}.

\begin{proposition}\label{thm:lower-bound-hard}
    Let $\bX_1, \ldots, \bX_n \in [0,1]^D$.
    For any $g:[0,1]^d \to [0,1]^D$ having an $L_{\mathcal H}$-Lipschitz-continuous
    left inverse (that is there exists $h:\mathbb R^D \to \mathbb R^d$
    such that $h \circ g = \Id_d$), it holds that
    \begin{align}\tag{LB-hard}
        \wass_1(g \sharp\,\mathcal U_d ,Q) \geq \frac{1}{2L_{\mathcal H}(1 + (2{\sf V}_d n)^{1/d})},
        \label{eq:lb1}
     \end{align}
     where $Q$ is any probability satisfying $Q(\{\bX_1,\ldots,\bX_n\})=1$ and\footnote{${\sf V}_d$ is
     the volume of the unit ball in $\mathbb R^d$.} ${\sf V}_d = \pi^{d/2}/\Gamma(1+d/2)$.
\end{proposition}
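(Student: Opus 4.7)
My strategy is to pull the problem back from $[0,1]^D$ to $[0,1]^d$ via the left-inverse $h$, reducing the statement to a classical volumetric fact: the Wasserstein-$1$ distance between $\mathcal U_d$ and any $n$-atomic measure is of order $n^{-1/d}$.

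First, I would observe that for any $1$-Lipschitz function $\psi:\mathbb R^d\to\mathbb R$, the composition $\psi\circ h$ is $L_{\mathcal H}$-Lipschitz on $\mathbb R^D$, so $\psi\circ h/L_{\mathcal H}$ is a valid test function in $\mathcal F$. The identity $h\circ g=\Id_d$ yields $\mathbb E_{g\sharp\mathcal U_d}[\psi\circ h]=\mathbb E_{\mathcal U_d}[\psi]$, so taking the supremum over $1$-Lipschitz $\psi$ gives
\begin{align*}
{\sf d}_{\mathcal F}(g\sharp\mathcal U_d,\widehat{P}_n)\geq \frac{1}{L_{\mathcal H}}\,\wass_1(\mathcal U_d,\nu_n),\qquad \nu_n:=\frac{1}{n}\sum_{i=1}^n\delta_{h(\bX_i)}.
\end{align*}
This step reduces the problem to lower-bounding $\wass_1(\mathcal U_d,\nu_n)$, where $\nu_n$ is supported on a set $S\subset\mathbb R^d$ of cardinality at most $n$.

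Next, I would plug the $1$-Lipschitz test function $\psi(\bu)=\dist(\bu,S)$, which annihilates $\nu_n$, into the Kantorovich--Rubinstein duality: for any $r>0$,
\begin{align*}
\wass_1(\mathcal U_d,\nu_n)\geq \int_{[0,1]^d}\dist(\bu,S)\,\rmd\bu\geq r\,\bigl|\{\bu\in[0,1]^d:\dist(\bu,S)>r\}\bigr|.
\end{align*}
The $r$-neighbourhood of $S$ is contained in a union of $|S|\leq n$ Euclidean balls of volume ${\sf V}_d r^d$, so its complement inside $[0,1]^d$ has Lebesgue measure at least $(1-n{\sf V}_d r^d)^+$. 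Choosing $r=1/(1+(2n{\sf V}_d)^{1/d})$ and noting that $(1+(2n{\sf V}_d)^{1/d})^d\geq 2n{\sf V}_d$ ensures $n{\sf V}_d r^d\leq 1/2$, which gives $\wass_1(\mathcal U_d,\nu_n)\geq r/2$. Combined with the first step, this produces precisely the claimed constant.

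The only mildly delicate point is the calibration of $r$: a smaller choice makes $\psi$ nearly vanish on $[0,1]^d$, whereas a larger one collapses the complement to the empty set. Once the specific value that both preserves half of the volume and matches the proposition's constant is identified, the remainder is a routine Lipschitz/volumetric computation, and I do not anticipate any technical obstacle beyond this balance.
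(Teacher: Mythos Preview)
Your proposal is correct and follows essentially the same route as the paper: reduce to $\wass_1(\mathcal U_d,\nu_n)$ by composing test functions with the left inverse $h$, then plug in the $1$-Lipschitz distance-to-support function and use the volumetric bound $n{\sf V}_d r^d\le 1/2$ to retain half of the cube. The only cosmetic differences are that the paper truncates the distance function by $1$ and reaches the final constant via a short case split, whereas you pick $r=1/(1+(2n{\sf V}_d)^{1/d})$ directly; both yield the identical bound.
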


It is well-known \cite{dudley_1969} that the $\wass_1$
distance between the empirical and the true distribution is
$O(n^{-1/d})$. It has also been recently proved that the
rate $n^{-1/d}$ is optimal \citep{tang2022minimax}, in the sense that it cannot be improved by any other learned distribution. Thus, if a
generator $\tilde g_n$ is rate-optimal, the triangle
inequality yields $\wass_1(\tilde g_n\sharp \mathcal U_d, P_{n,X}) \leqslant \wass_1(\tilde g_n\sharp \mathcal U_d, P^*) + \wass_1(P^*, P_{n,X})=O(n^{-1/d})$. Thus, if a learned
generator is rate-optimal, its maximal distance from the empirical distribution $P_{n,X}$ of the training set is
of order $n^{-1/d}$. This means, in view of our result,
that if a rate-optimal learned generator has a smooth
left inverse, it lays at a maximal distance from $P_{n,X}$.

To the best of our knowledge, \Cref{thm:lower-bound-hard} provides the
first mathematical quantification of the diversity of examples generated
by the generator. It not only demonstrates that the generator deviates
maximally from the empirical distribution but also shows that it
maintains the same minimum distance from any distribution concentrated
on $n$ points.

\subsection{The Case of Soft Constraint: $\lambda\in(0,\infty)$}

We now turn our attention to measuring the dissimilarity between the empirical distribution and
the generator obtained by \ref{LIPERMe} when $\lambda <
+\infty$. We refer to this scenario as the case of a soft
constraint on left-invertibility. Recall that the
introduction of a penalized version of
the optimization problem, as opposed to the hard
constraint, aims to enhance computational tractability and
facilitate implementation using available tools.
On the downside, the lower bound on the
distance from the empirical distribution, as
presented in the following theorem, is slightly weaker
than that for the constrained generator.

\begin{theorem}\label{thm:lower-bound-pen}
    Let $\bX_1,\dots, \bX_n \in [0,1]^D$ and $\mathcal{H} \subseteq \textup{Lip}_{L_{\mathcal H}}([0,1]^D\to [0,1]^d)$.
    For any $\lambda>0$,
    \begin{align}
        \wass_1(\widehat{g}_{n, \eps}\sharp\,\mathcal{U}_d&, P_{n,X}) \ge \frac{1}{2L_{\mathcal H}(1+(2{\sf V}_dn)^{1/d})}\  - \\
        &\frac1{L_{\mathcal H}\lambda^{1/q}}  \Big( \inf_{g \in \mathcal{G}_{\mathcal H}} \wass_1(g\sharp\,\mathcal{U}_d, P_{n,X}) + \eps \Big)^{1/q},
    \end{align}
    where  $\mathcal{G}_{\mathcal H} =
    \{g\in\mathcal G:\textup{pen}(g) = 0\}$.
\end{theorem}
\begin{corollary}
    If $\lambda $ is chosen so that the inequality
    \begin{align}\label{eq:cor1}
        \lambda \geq 8^q (1+ (2{\sf V}_dn)^{q/d})
        \inf_{g \in \mathcal{G}_{\mathcal H}}\mathbb E\big[
        \wass_1(g\sharp\, \mathcal{U}_d, P_{n,X})\big],
    \end{align}
    holds true, then
    \begin{align}
        \mathbb E\big[\wass_1(\widehat{g}_{n}\sharp\,\mathcal{U}_d, P_{n,X})
        \big]\ge \frac{1}{4L_{\mathcal H}(1+(2{\sf V}_dn)^{1/d})}.
    \end{align}
\end{corollary}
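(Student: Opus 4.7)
The plan is to start from the pointwise lower bound established in \Cref{thm:lower-bound-pen} and take expectations on both sides, which gives
\[
\mathbb E\big[{\sf d}_{\mathcal{F}}(\hat{g}_{n}\sharp\,\mathcal{U}_d, \hat P_n)\big] \ge \frac{1}{2L_{\mathcal H}(1+(2{\sf V}_dn)^{1/d})} - \frac{1}{L_{\mathcal H}\lambda^{1/q}}\, \mathbb E\Big[\inf_{g \in \mathcal{G}_0} {\sf d}_{\mathcal{F}}^{1/q}(g\sharp\,\mathcal{U}_d, \hat P_n)\Big].
\]
The task then reduces to showing that, under the stated condition on $\lambda$, the subtractive term on the right-hand side is at most $\frac{1}{4L_{\mathcal H}(1+(2{\sf V}_dn)^{1/d})}$, so that subtraction from the leading term produces exactly the claimed bound.

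The key auxiliary inequality I would establish is
\[
\mathbb E\Big[\inf_{g \in \mathcal{G}_0} {\sf d}_{\mathcal{F}}^{1/q}(g\sharp\,\mathcal{U}_d, \hat P_n)\Big] \le \Big(\inf_{g \in \mathcal{G}_0}\mathbb E\big[{\sf d}_{\mathcal{F}}(g\sharp\,\mathcal{U}_d, \hat P_n)\big]\Big)^{1/q}.
\]
To prove it, I fix an arbitrary $g_0 \in \mathcal{G}_0$, trivially upper bound the infimum inside the expectation by ${\sf d}_{\mathcal{F}}^{1/q}(g_0\sharp\,\mathcal{U}_d, \hat P_n)$, then apply Jensen's inequality to push the expectation inside the map $x\mapsto x^{1/q}$, which is concave on $[0,\infty)$ because $q\ge 1$. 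Finally I take the infimum of the resulting bound over $g_0 \in \mathcal{G}_0$.

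Combining this auxiliary inequality with the hypothesis $\lambda \ge 4^q(1+(2{\sf V}_dn)^{1/d})^q \inf_{g \in \mathcal{G}_0}\mathbb E[{\sf d}_{\mathcal{F}}(g\sharp\,\mathcal{U}_d, \hat P_n)]$, I raise both sides of the hypothesis to the $1/q$ power to obtain
\[
\lambda^{1/q} \ge 4(1+(2{\sf V}_dn)^{1/d})\, \mathbb E\Big[\inf_{g \in \mathcal{G}_0} {\sf d}_{\mathcal{F}}^{1/q}(g\sharp\,\mathcal{U}_d, \hat P_n)\Big],
\]
which is equivalent to saying that the subtractive term in the first display is bounded by $\frac{1}{4L_{\mathcal H}(1+(2{\sf V}_dn)^{1/d})}$. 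Substituting back yields the conclusion. The only subtle step is the commutation of expectation, infimum, and the $1/q$ power; it works precisely because the bound $\inf \le g_0$-value allows the infimum to be pulled out \emph{after} applying Jensen, and because $q\ge 1$ makes $x^{1/q}$ concave in the right direction. The rest of the argument consists of direct algebraic substitution.
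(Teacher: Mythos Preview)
Your proof is correct and follows exactly the route the paper intends: the corollary is stated in the paper without proof, as an immediate consequence of \Cref{thm:lower-bound-pen}, and your argument supplies precisely the missing details---taking expectations of the pointwise bound, bounding $\mathbb E[\inf_{g\in\mathcal G_0}{\sf d}_{\mathcal F}^{1/q}]$ by $(\inf_{g\in\mathcal G_0}\mathbb E[{\sf d}_{\mathcal F}])^{1/q}$ via Jensen's inequality for the concave map $x\mapsto x^{1/q}$, and then invoking the hypothesis on $\lambda$. There is nothing to add.
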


The corollary tells us that if the penalty $\lambda$ is not too
small, any generator satisfying (\ref{LIPERMe}) has a
deviation of the order $n^{-1/d}$ from the empirical distribution.
To gain some understanding of how restrictive this constraint
on $\lambda$ is, let us note that if there is a $L^*$-Lipschitz-continuous $g^*\in \mathcal{G}_{\mathcal H}$
such that $\wass_1(P^*,g^* \sharp\,\mathcal U_d)\leqslant
\sigma^*$, one can check that\footnote{See \Cref{AppB:4}}
\begin{align}
    \inf_{g\in \mathcal{G}_{\mathcal H}}\mathbb E\big[
\wass_1(g\sharp\, \mathcal{U}_d, P_{n,X})\big]
&\leq  \frac{cL^*\sqrt{d}}{n^{1/d}} + \sigma^*
\label{eq:6}
\end{align}
where $c$ is a universal constant.

\begin{remark}
In view of (\ref{eq:cor1}) and (\ref{eq:6}),
when $\sigma^* = 0$ and $q=2$, choosing the penalty
parameter $\lambda$ larger than $C_d n^{1/d}$---
for a constant $C_d$ that depends only on the
dimension of the latent space---
is enough to guarantee that the generator
\ref{LIPERMe} will significantly deviate from the
empirical distribution.
\end{remark}

Before closing this section, let us note that the
inspection of the proof shows that the claim of the
last theorem holds true if we replace $\wass_1$ by
any other distance dominating $\wass_1$. In particular,
the claim is true for $\wass_2$ and for $\sf d_{\mathcal{
F}}$ with any $\mathcal F$ containing all the
1-Lipschitz functions.

\begin{figure*}[htbp]
        %\centering
        %    \begin{subfigure}[b]{0.24\textwidth}
        %    \centering
        %    \includegraphics[width=0.99\textwidth]{icml2024/figures/mnist/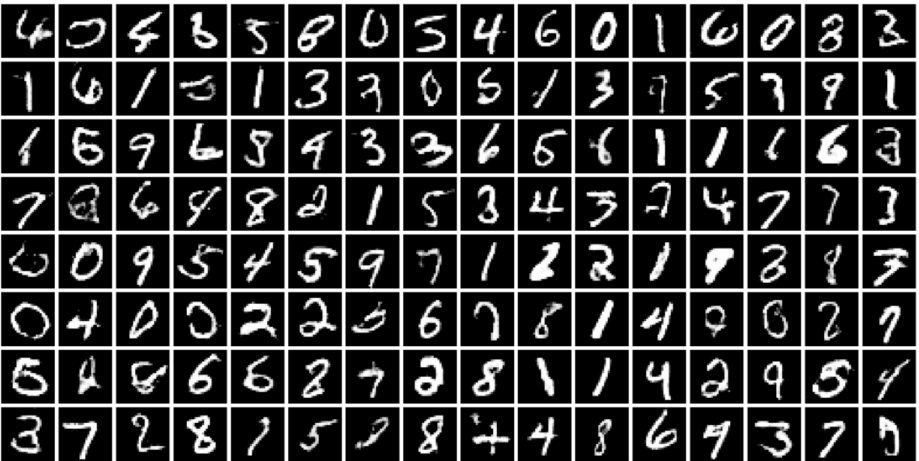}
            %\caption[ $\lambda = 0$.]%
            %{{\small  $\lambda = 0$.}}
        %    \label{fig:lambda0}
        %\end{subfigure}
        %\hfill
        %\begin{subfigure}[b]{0.24\textwidth}
        %    \centering
        %    \includegraphics[width=0.99\textwidth]{figures/mnist/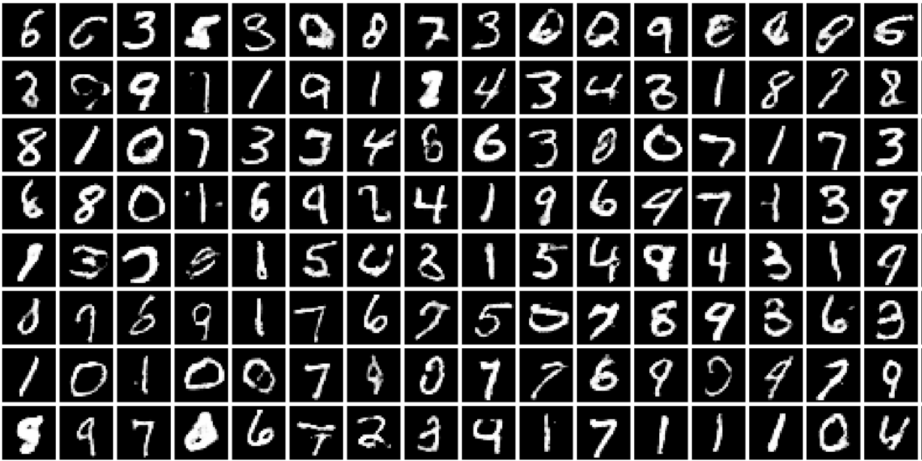}
            %\caption[ $\lambda = 1$.]%
            %{{\small  $\lambda = 1$.}}
        %    \label{fig:lambda1}
        %\end{subfigure}
        %\begin{subfigure}[b]{0.24\textwidth}
        %    \centering
        %    \includegraphics[width=0.99\textwidth]%{figures/mnist/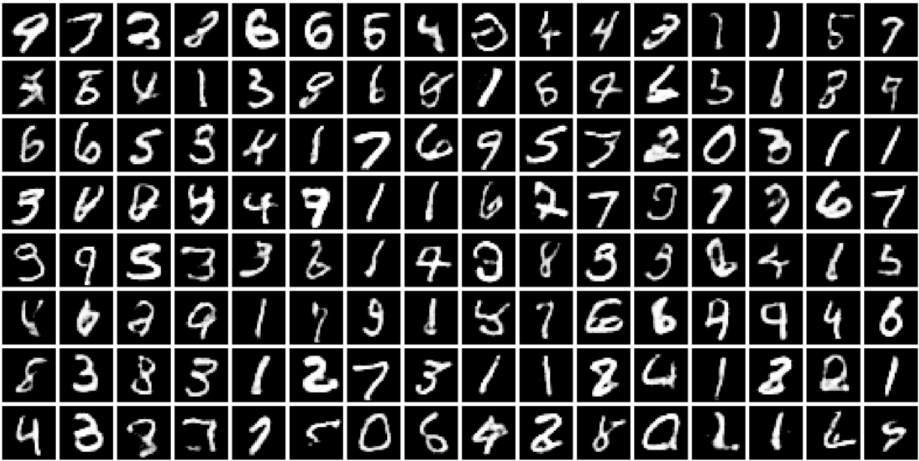}
            %\caption[ $\lambda = 4$.]%
            %{{\small $\lambda = 4$.}}
        %    \label{fig:lambda4}
        %\end{subfigure}
        %\hfill
        %\begin{subfigure}[b]{0.24\textwidth}
        %    \centering \includegraphics[width=0.99\textwidth]{figures/mnist/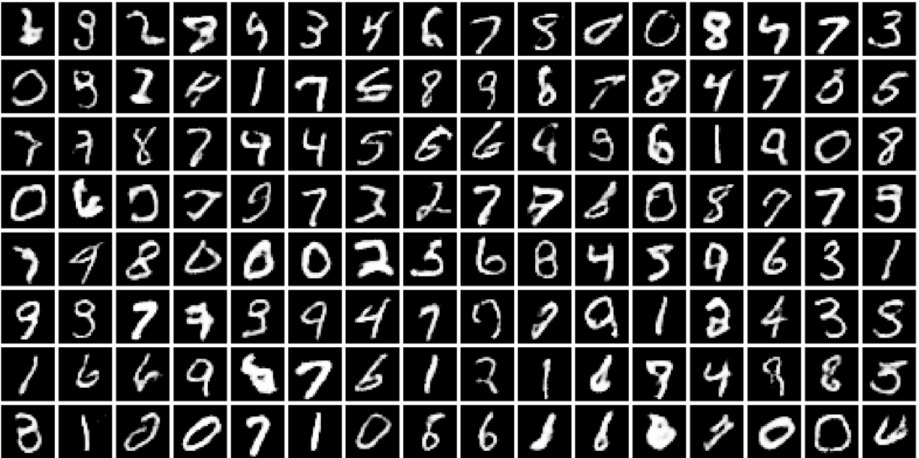}
            %\caption[ $\lambda = 8$.]%
            %{{\small $\lambda = 8$.}}
        %    \label{fig:lambda8}
        %\end{subfigure}
        %\vskip\baselineskip
        \includegraphics[width= \textwidth]{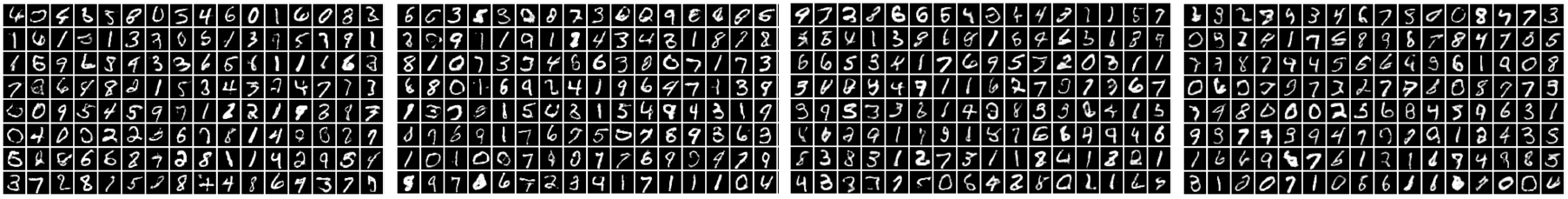}
        \vspace*{-20pt}
        \caption[Handwritten digits generated for different values of $\lambda$.]
        {\small Handwritten digits generated by LIPERM,
        from left to right: $\lambda = 0,1,4,8$. See Fig.~\ref{fig:mnist2}
        for higher-resolution images.}
        \label{fig:mnist0}
\end{figure*}

\begin{figure}[htbp]
        \includegraphics[width=0.49\linewidth]{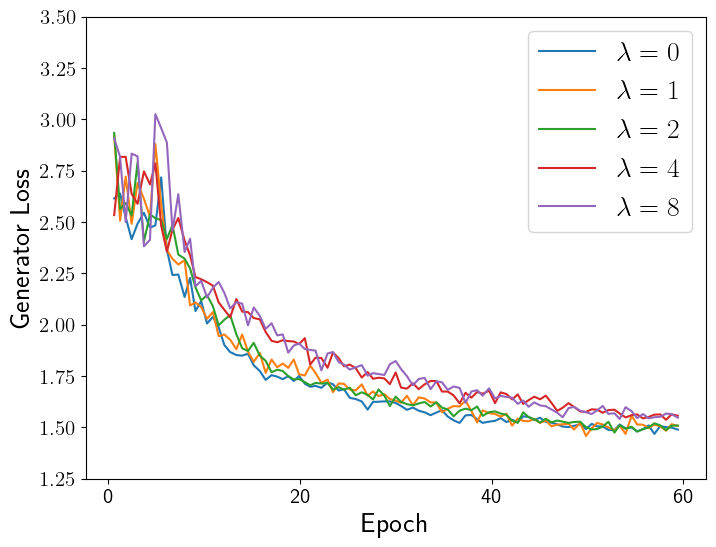}
        \includegraphics[width=0.49\linewidth]{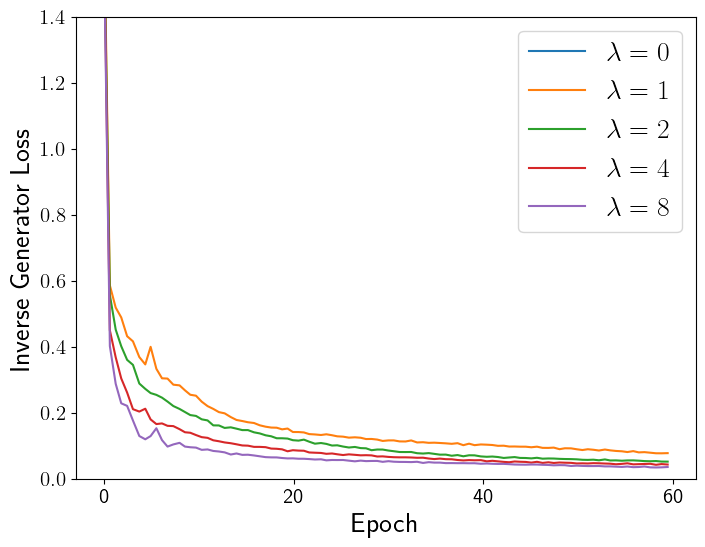}
    \vspace*{-10pt}
    \caption[Handwritten digits generated for different values of $\lambda$.]
    {\small LIPERM on MNIST data. The behavior of the
    generator loss and of the
    left-inverse penalty across the iterations.}
    \label{fig:mnist1}
\end{figure}

\section{Precision of
Left-Inverse-Penalized ERM}\label{sec:3}

In this section, we assess the precision of a generator satisfying (\ref{LIPERMe}) for ${\sf d} = {\sf d}_{\mathcal{F}}$, the integral probability metric (IPM) based on a set of test functions $\mathcal{F}$.
We introduce two parameters $\sigma^*$ and $L^*$, that
quantify the ``manifold assumption''. More precisely, we
 say that $P^*$ satisfies assumption \textsf{A}$(L^*,\sigma^*)$ if
\begin{align}
\label{AssA}\tag*{{\textsf{A}\text{$(L^*,\sigma^*)$}}}
\begin{matrix}
    \exists\,g^*\in \text{Lip}_{L^*}\big([0,1]^d\to[0,1]^D\big)\\
    \text{such that}\quad \wass_1(g^*{\sharp}\,\mathcal U_d,P^*)\leqslant \sigma^*.
\end{matrix}
\end{align}
This assumption accommodates both well-specified and mis-specified scenarios. The former corresponds to \textsf{A}$(L^*,0)$, where $P^*$ is the push-forward of $\mathcal U_d$ by an $L^*$-Lipschitz-continuous function.
The latter corresponds to the case \textsf{A}$(L^*,\sigma^*)$ with $\sigma^*>0$, the parameter $\sigma^*$ indicating the degree of the departure
from a well-specified setting.

\begin{theorem}\label{thm:upper}
    Assume that $\mathcal{F}\subseteq \textup{Lip}_1(\mathbb R^D\to\mathbb R)$  and the dimension of the latent space satisfies $d > 2$. If the observations $\mathbf{X}_1, \ldots, \mathbf{X}_n$ are i.i.d., drawn from $P^*$ satisfying \AssA{L^*}{\sigma^*} for some $L^*, \sigma^*$, then \textup{\ref{LIPERMe}} $\widehat{g}_{n, \eps}$ satisfies
    \begin{align}
        \mathbb E[{\sf d}_ {\mathcal F}(\hat g_{n, \eps}
        \sharp\,\mathcal U_d,P^*)] \leqslant &\inf_{g
        \in \mathcal{G}}  \big\{ {\sf d}_{\mathcal{F}}(g\sharp\,
        \mathcal{U}_d, P^*) + \lambda\, \textup{pen}_{\mathcal H}
        (g) \big\}\\
        &+ 4\sigma^* + \eps + \frac{cL^*\sqrt{d}}{n^{1/d}},
        \label{eq:upper}\tag{UB}
    \end{align}
    where $c>0$ is a universal constant.
\end{theorem}

\begin{remark}
If we assume that the oracle $g^*$ appearing in
\AssA{L^*}{\sigma^*} has a left-inverse that is $L_{\mathcal{H}}$-Lipschitz-continuous, the upper
bound provided by \Cref{thm:upper} becomes $5\sigma^* + {cL^*\sqrt{d}\,}{n^{-1/d}}$. As proven in \citep{schreuder_brunel_dalalyan_2021,tang2022minimax}, this upper bound is minimax-rate-optimal. Notably, the ambient dimension does not appear in the upper bound.
\end{remark}

\begin{remark}
    The assumption $d>2$ is not crucial for deriving an  upper bound similar to (\ref{eq:upper}). In case of $d\in\{1,2\}$, slight modifications occur in the last term: for $d=1$, the denominator becomes $n^{1/2}$, and for $d=2$, an additional $\log n$ factor appears in the numerator. These adjustments are derived by combining our proof, detailed in \Cref{appA:1}, with the corresponding approximation bounds for the uniform distribution in $\wass_1$ distance, addressing the cases of $d\in\{1,2\}$.
\end{remark}

\begin{remark}
    When the true distribution $P^*$ exhibits low sample diversity, \ref{LIPERMe} struggles to provide a precise approximation. In this scenario, the upper bound in \Cref{thm:upper} tends to be large, as indicated by the constant $\sigma^*$.
\end{remark}

\section{Handling Functional Approximations}

The generator (\ref{LIPERM}) and its approximate version (\ref{LIPERMe}) rely on the functional classes $\mathcal G$ and $\mathcal H$. Opting for smaller, parametric classes enhances computational efficiency. Yet, for minimizing the bias term in (\ref{eq:upper}), it is essential to choose $\mathcal G$ and $\mathcal H$ as large as possible. This prompts the question: what if, during training, we substitute $\mathcal G$ and $\mathcal H$ with smaller sets $\mathcal G_0$ and $\mathcal H_0$ possessing good approximation properties? Neural networks, acknowledged as universal approximators for smooth functions \citep{yarotsky2017error,petersen2018optimal, nakada2020adaptive}, are compelling candidates for $\mathcal G_0$ and $\mathcal H_0$.

This consideration extends to the functional class $\mathcal F$. While we aim to gauge the precision of a generator using an IPM with a broad $\mathcal F$, replacing $\mathcal F$ with a smaller set during the training process decreases computational complexity. The next result shows
the impact of replacing $\mathcal F,\mathcal G$ and $\mathcal H$ in (\ref{LIPERMe}) by smaller approximation classes.

\begin{proposition}\label{prop:3}
    Let $\mathcal{F}\subseteq \textup{Lip}_1(\mathbb R^D\to\mathbb R)$  and  $d > 2$. Let $\textup{pen}$ be defined by (\ref{LIP}) with
    $q=2$. Assume that observations $\mathbf{X}_1, \ldots, \mathbf{X}_n$ are i.i.d.\ and drawn from $P^*$, satisfying \AssA{L^*}{\sigma^*} for some $L^*, \sigma^* > 0$. Let $\mathcal F_0\subseteq\mathcal F$, $\mathcal G_0\subseteq \mathcal G$
    and $\mathcal H_0\subseteq \mathcal H$ be functional classes satisfying
    \begin{align}
        &\sup_{f\in\mathcal F}\inf_{f_0\in \mathcal F_0}
        \|f_0 - f\|_\infty\leqslant \delta_{\mathcal F},
        \label{deltaF}\\
        &\sup_{g\in\mathcal G}\inf_{g_0\in \mathcal G_0}
        \|g_0 - g\|_\infty\leqslant \delta_{\mathcal G},
        \label{deltaG}\\
        &\sup_{h\in\mathcal H}\inf_{h_0\in \mathcal H_0}
        \|h_0 - h\|_\infty\leqslant \delta_{\mathcal H}.
        \label{deltaH}
    \end{align}
    Then, any learned generator $\widehat{g}^0 =\widehat{g}^0_{n, \eps}$ satisfying
    \begin{align}
        \hat L_n^{{\sf d}_{\mathcal F_0},\mathcal H_0}
        ({\hat g}^0_{n,\eps}) \leqslant \min_{g\in
        \mathcal G_0}
        \hat L_n^{{\sf d}_{\mathcal F_0},\mathcal H_0}
        (g) + \eps,
    \end{align}
    also satisfies
    %\begin{align}
    %    \hat L_n^{{\sf d}_{\mathcal F},\mathcal H}
    %    ({\hat g}^0_{n,\eps}) \leqslant \min_{g\in
    %    \mathcal G}
    %    \hat L_n^{{\sf d}_{\mathcal F},\mathcal H}
    %    (g) + \eps + \delta,
    %\end{align}
    $\displaystyle \hat L_n^{{\sf d}_{\mathcal F},\mathcal H}
        ({\hat g}^0_{n,\eps}) \leqslant \min_{g\in
        \mathcal G}\nolimits
        \hat L_n^{{\sf d}_{\mathcal F},\mathcal H}
        (g) + \eps + \delta$,
    where $\delta =
    2\delta_{\mathcal F} + 2\sqrt{d}\,\delta_{\mathcal H}
    +\big(1+ 2\lambda  \sqrt{d}\, L_{\mathcal H }\big)
    \delta_{\mathcal G}$.
\end{proposition}
Combining the results of this proposition with \Cref{thm:upper}, we obtain the following property: if
$\mathcal F_0$, $\mathcal G_0$ and $\mathcal H_0$ are
chosen so that $\delta_{\mathcal F} \leqslant \eps/6$,
$\delta_{\mathcal G} \leqslant \eps/3(1+2\lambda \sqrt{d}\,
L_{\mathcal H})^{-1}$ and $\delta_{\mathcal H}\leqslant
d^{-1/2}\eps/6$, see (\ref{deltaF}-\ref{deltaH}), then
any $\eps$-minimizer ${\hat g}^0$ of $\hat L_n^{\sf d_{\mathcal F_0},\mathcal H_0}$ over $\mathcal G_0$ satisfies
\begin{align}
    \mathbb E[{\sf d}_ {\mathcal F}({\hat g}^0
    \sharp\,\mathcal U_d,P^*)] \leqslant &\inf_{g
    \in \mathcal{G}}  \big\{ {\sf d}_{\mathcal{F}}(g\sharp\,
    \mathcal{U}_d, P^*) + \lambda\, \textup{pen}_{\mathcal H}
    (g) \big\}\\
    &+ 4\sigma^* + \frac{cL^*\sqrt{d}}{n^{1/d}}
    + 2\eps.
\end{align}
This suggests that with appropriately chosen approximation classes, the learned generator can approach the performance of the best generator from the class of all Lipschitz-continuous functions \(g\) with a Lipschitz-continuous left-inverse \(h\). Notably, utilizing a set of ``critics'' \(\mathcal F\) consisting of neural networks that are \(\delta\)-approximations of 1-Lipschitz functions results in an error within \(2\delta\) of the error obtained when trained with \(\mathcal F = \textup{Lip}_1\). Note that according to \citep[Theorem~1]{yarotsky2017error}, one can achieve an
error $\delta_{\mathcal F}$ for $\mathcal F$ the functions
with derivatives bounded by $1$ using ReLU activated
neural networks of depth $O(\log(1/\delta_{\mathcal F}))$
and of the number of weights and units $O(
\delta^{-D}_{\mathcal F}\log(1/\delta_{\mathcal F}))$.
Furthermore, in view of \Cref{thm:lower-bound-pen},
\begin{align}
        \wass_1(\widehat{g}^0
        \sharp\,\mathcal{U}_d&, P_{n,X}) \ge \frac{1}{2L_{\mathcal H}(1+(2{\sf V}_dn)^{1/d})}\  - \\
        &\frac1{L_{\mathcal H}\lambda^{1/2}}  \Big( \inf_{g \in \mathcal{G}_{\mathcal H}} \wass_1(g\sharp\,\mathcal{U}_d, P_{n,X}) + 2\eps \Big)^{1/2},
    \end{align}
where $c>0$ is a universal constant.

Finally, note that the last proposition can be used
with $\mathcal G = \mathcal G_0$, a parametric set defined
by neural networks with a given architecture. Then
$\delta_{\mathcal G} =0$, which conveniently simplifies
the expression of $\delta$. In addition, the approximation
error may be directly bounded using results from
\citep{yang2022capacity,lu2020universal}.

\section{Numerical Experiments}
\label{sec:6new}

In this section, we aim to evaluate the performance
of the left-inverse-penalized WGANs. Our implementation\footnote{Our
code uses the framework of \cite{labml} and is available
\href{https://github.com/TigranGalstyan/LIPERM_annotated_deep_learning_paper_implementations/tree/liperm/labml_nn/gan/wasserstein/gradient_penalty/liperm}{here}.
}
follows the pseudo-code presented in Algorithm \ref{alg:cap},
and is inspired by the code accompanying \citep{GulrajaniAADC17},
where WGANs with gradient penalty on the discriminator/critic
network is discussed. We add the LIPERM penalization to the
objective function of the WGANs. All the functional classes
$\mathcal F_0$, $\mathcal G_0$, and $\mathcal H_0$ are represented
by neural networks, the architectures of which are presented in
the supplementary material. In all our experiments, we chose
$n_{critic} = 5$ and $\gamma= 1$
.
\begin{algorithm}[h]%[htbp]
    \caption{WGAN-LIPERM. We take $\lambda \in \{0, 1, 4, 8\}$,
    %$\gamma=10$, $n_{\text{critic}} = 5$.
    }\label{alg:cap}
    \begin{algorithmic}[1]
        \REQUIRE LIP coefficient $\lambda$, gradient penalty
        coefficient $\gamma$, number of iterations $N_{\rm iter}$,
        number of critic iterations per generator iteration
        $n_{\text{critic}}$, batch size $m$
        \REQUIRE initial critic and generator parameters
        $(\bm w_0,\bm \theta_0)$, initial left inverse
        network parameters $\bm\phi_0$, $k=0$.
        \REPEAT
        \STATE{$k\gets k+1$}
        \FOR{$t = 1, ..., n_{\text{critic}}$}
        \FOR{$i = 1,...,m$}
            \STATE Draw $\bm x\sim P_{n,X}$\quad {\color{olive}
            (true examples)}
            \STATE Draw $\bm u \sim \mathcal{U}_d$\quad\quad {\color{olive}
            (latent variables) }
            \STATE Draw $\epsilon \sim \mathcal U [0, 1]$
            \STATE $\tilde{\bm x} \gets G_{\bm\theta} (\bm u)$
            \quad\quad {\color{olive} (generated examples)}
            \STATE $\hat{\bm x} \gets \epsilon \bm x +
            (1 - \epsilon) \tilde{\bm x}$
            \STATE $L_d \gets F_{\bm w}(\tilde{\bm x}) -
            F_{\bm w}(\bm x)$
            \STATE $L_d^{(i)} \gets L_d + \gamma (\|\nabla_{\hat{
            \bm x}} F_{\bm w}(\hat{\bm x})\|^2 - 1)^2$
        \ENDFOR
        \STATE $\bm w \gets \text{Adam}(\nabla_w \frac{1}{m}
        \sum_{i=1}^m L_d^{(i)},\{\bm w\})$
        \ENDFOR
        %\State Sample a batch of latent variables $\{\bm u^{(i)}\}_{i=1}^m \sim \mathcal{U}_d$.
        %\Comment{Different/same batch? (in code?) different indices?}
        % \State $\theta \gets  \text{Adam}(\nabla_\theta \frac{1}{m} \sum_{i=1}^m - D_w (G_\theta (\bm u)) + \|H_\phi(G_\theta(\bm u)) - \bm u\|^2, \theta, \alpha, \beta_1, \beta_2)$
        % \State $\phi \gets  \text{Adam}(\nabla_\phi \frac{1}{m} \sum_{i=1}^m  \|H_\phi(G_\theta(\bm u)) - \bm u\|^2, \phi, \alpha, \beta_1, \beta_2)$
        \FOR{$i = 1,...,m$}
            \STATE Draw $\bm u \sim \mathcal{U}_d $
            \STATE $L_g^{(i)} \gets - F_{\bm w} (G_{\bm \theta}
            (\bm u)) + \lambda
            \|H_{\bm\phi}(G_{\bm\theta}(\bm u)) - \bm u\|^2$
        \ENDFOR
        \STATE $({\bm \theta}, {\bm\phi}) \gets
        \text{Adam}(\nabla_{\bm\theta, \bm\phi} \frac{1}{m}
        \sum_{i=1}^m L_g^{(i)} , \{(\bm\theta, \bm\phi)\})$
        \UNTIL{$k>N_{\rm iter}$}
    \end{algorithmic}
\end{algorithm}

% alternative

% \begin{figure}[h]
%     \includegraphics[width = 0.45\textwidth]{icml2024/liperm_algorithm.pdf}
%     \label{algo}
% \end{figure}

We conducted experiments on three widely used datasets: Swiss Roll,
MNIST and CIFAR 10. The results are briefly summarized in this
section. The main messages of these experiments are that (a) it
is possible to implement the LIPERM algorithm and to get generators
that are nearly invertible, that is they have a small left inverse
penalty, (b) the visual quality of the results does not deteriorate
when the penalty parameter $\lambda$ is increased. As for the non
replication, it seems that with the architectures and optimizers used
in the standard data sets considered in this section, the replication
or the lack of creativity is not an issue. Therefore, we could not
observe an increase in creativity due to introducing the left
inverse penalty.

\textbf{Swiss Roll \citep{swissroll}:}
The Swiss Roll dataset consists  of 2D points arranged in a rolled
structure, corrupted by a 2D Gaussian noise with some standard
deviation $\sigma$. We used three values of $\sigma = 3/2,3/4,3/8$
We used a training set of size 1000, a batch-size of 200 and run
experiments for each value of $\lambda$ from  $\{0,1,4,8\}$.
Fig.~\ref{fig:swiss_roll_gen0}
and Fig.~\ref{fig:swiss_large_sigma} in Appendix show the training set
and the points generated by the learned distribution.  These results
are consistent with the theory: for increasing but not very large values
of $\lambda$ the accuracy of the generator is preserved. Note
that it is known \citep{VEEGAN} that training a generator for this
data is highly unstable. Plots in Fig.~\ref{fig:swiss_roll_gen2}
confirm this instability and show that, unfortunately, the left-inverse
penalty does not alleviate it. We also conducted additional unreported
experiments, training for over 4,000 epochs (up to 20,000 epochs),
but observed no improvement in the results.
\begin{figure*}[htbp]
        \centering
            \begin{subfigure}[b]{0.24\textwidth}
            \centering
            \includegraphics[width=0.98\textwidth]{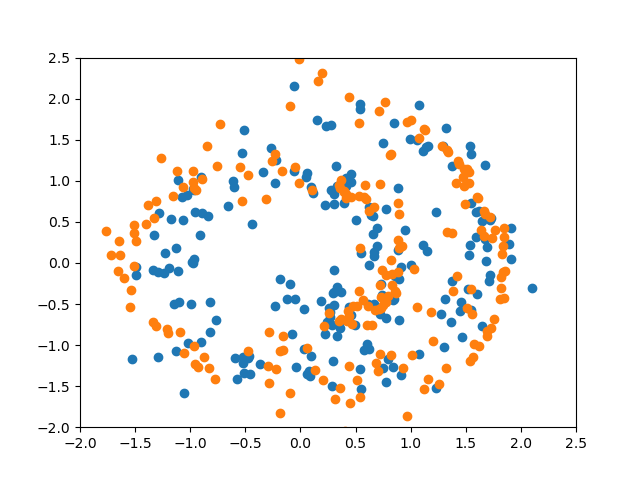}
        \end{subfigure}
        \hfill
        \begin{subfigure}[b]{0.24\textwidth}
            \centering
            \includegraphics[width=0.98\textwidth]{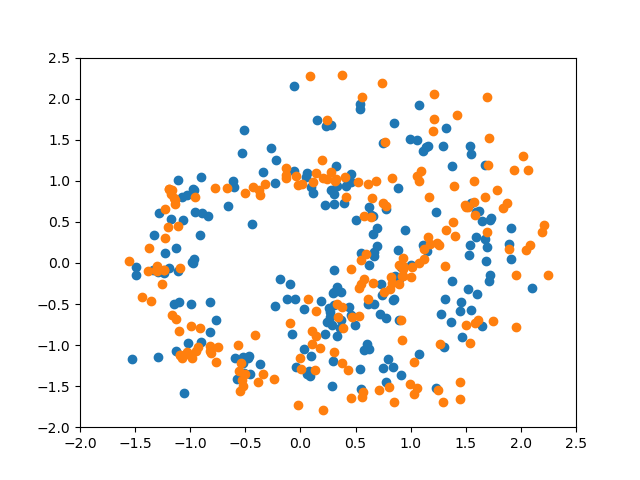}
        \end{subfigure}
        \hfill
        \begin{subfigure}[b]{0.24\textwidth}
            \centering
            \includegraphics[width=0.98\textwidth]{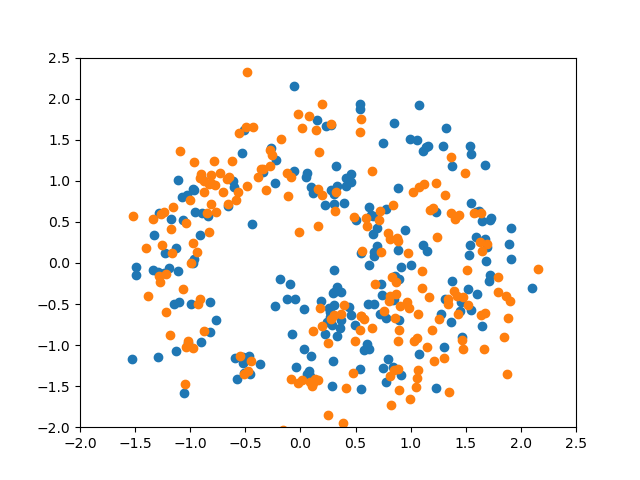}
        \end{subfigure}
        \hfill
        \begin{subfigure}[b]{0.24\textwidth}
            \centering 
            \includegraphics[width=0.98\textwidth]{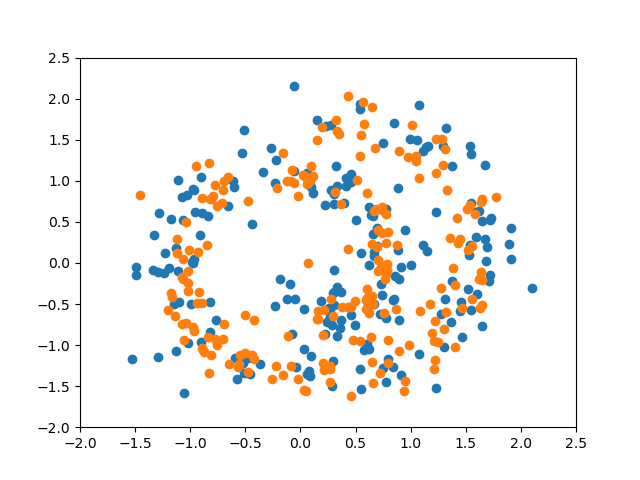}
        \end{subfigure}
        \vspace{-10pt}
        \caption[ $\lambda$.]
        {\small Swiss Roll: samples generated by LIPERM WGAN
        (blue) and original data (orange) for
        $\lambda = 0,1,4,8$ (left to right).}
        \label{fig:swiss_roll_gen0}
\end{figure*}

\textbf{MNIST \citep{lecun1998mnist}:}
MNIST dataset  is commonly used for handwritten digit recognition.
In this paper, we train WGAN plus LIPERM penalty on MNIST and
generate images for different values of LIPERM coefficient $\lambda
\in \{0, 1, 4, 8\}$. As shown in Fig.~\ref{fig:mnist1}, the
training process seems to converge after nearly 60 epochs, when
the batch size is 512. The architectures of the networks used
in this experiment are presented in Table \ref{table:MNIST_architecture}
in Appendix.

The results depicted in Fig.~\ref{fig:mnist0} suggest
that the generated images when $\lambda = 1,4,8$ are as good as those for the vanilla
WGAN ($\lambda = 0$). Furthermore, the right plot of Fig.~\ref{fig:mnist1} shows
that the implementation we used effectively minimizes the LIP and that the final
result is almost left invertible.

\begin{figure*}%[h]
        % \centering
        % \begin{subfigure}[b]{0.23\textwidth}
        %    \centering
        % \includegraphics[width=0.95\textwidth]{figures/cifar10/cifar_fid.pdf}
        %     % \caption[Frechet Inception Distance.]%
        %     \label{fig:original}
        % \end{subfigure}
        \begin{subfigure}[b]{0.32\textwidth}
            \centering
            \includegraphics[width=0.95\textwidth]{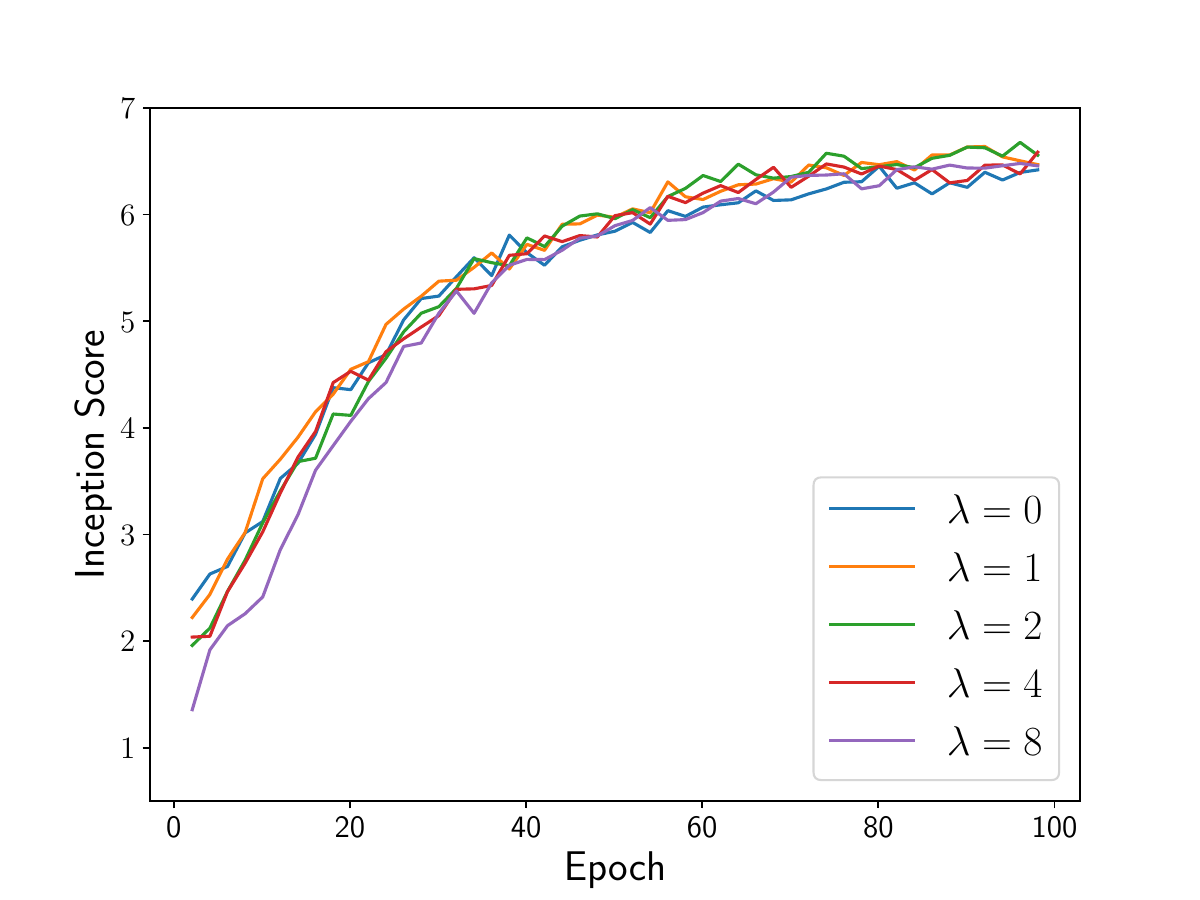}
        \end{subfigure}
        \hfill
        \begin{subfigure}[b]{0.32\textwidth}
            \centering
            \includegraphics[width=0.95\textwidth]{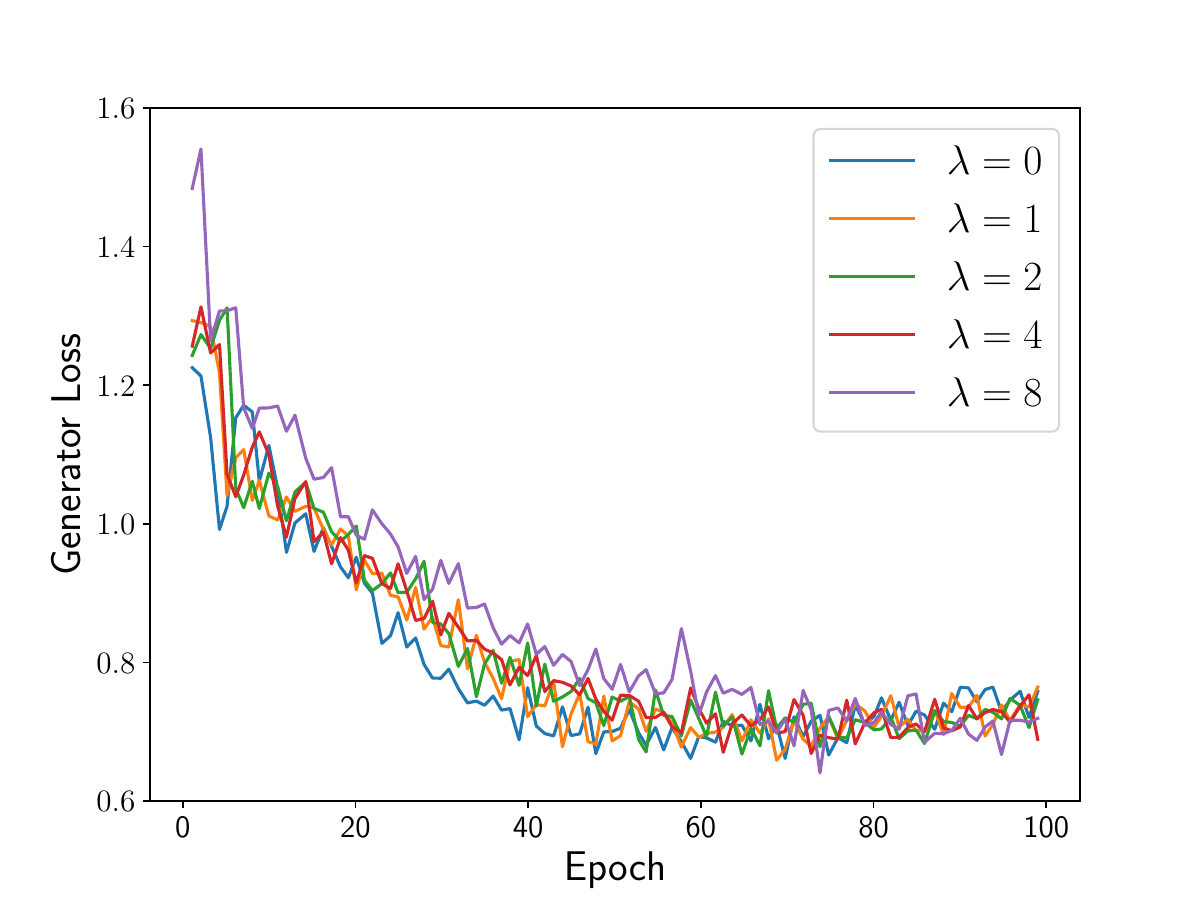}
            \label{fig:original}
        \end{subfigure}
        \hfil
        \begin{subfigure}[b]{0.32\textwidth}
            \centering
            \includegraphics[width=0.95\textwidth]{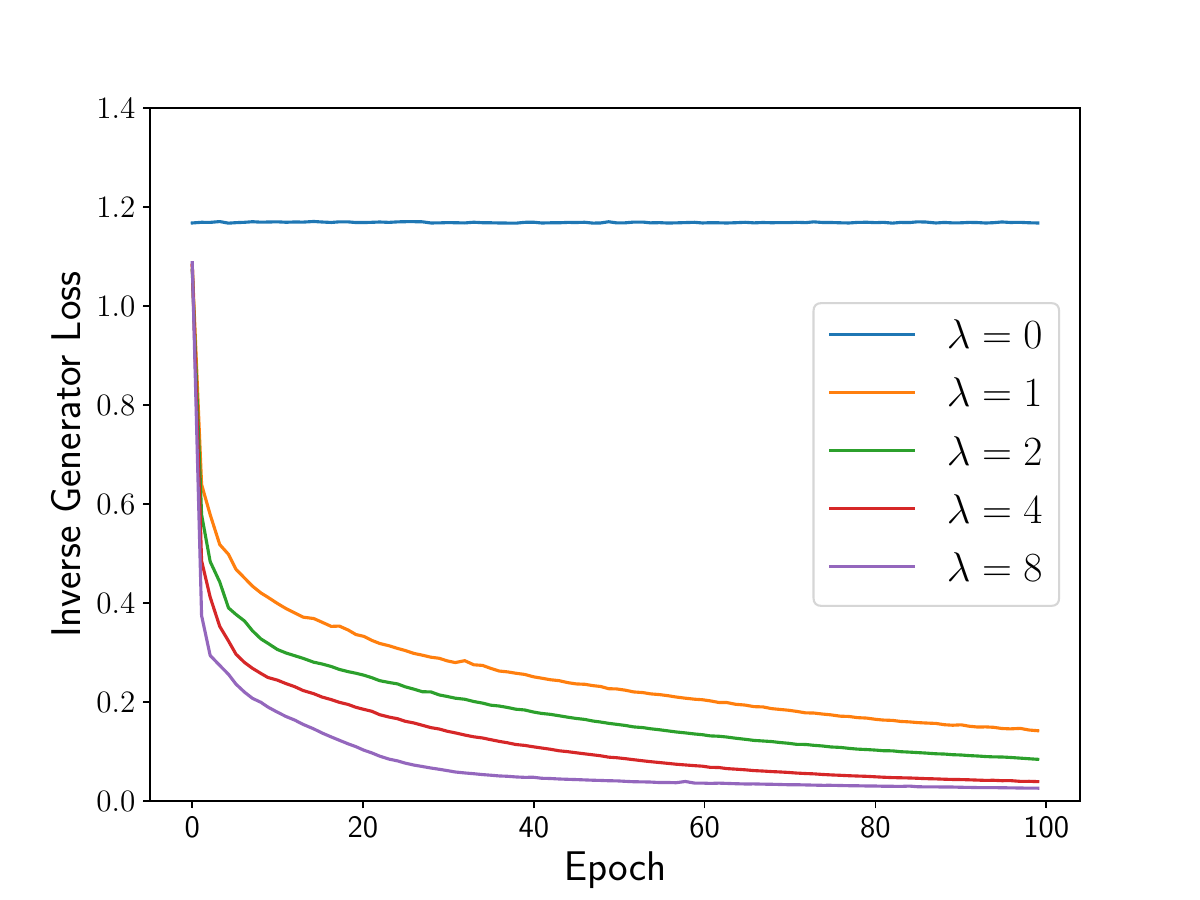}
        \end{subfigure}
        \caption{Experimental results on CIFAR-10 data set. Left: the
        evolution of the Inception Score across the iterations. Middle:
        the evolution of the generator loss across the iterations, for
        various values of $\lambda$. Right: the evolution of the
        left-inverse penalty across the iterations, for various values
        of  $\lambda$.}
        \label{fig:cifar_fid_is0}
\end{figure*}

\textbf{CIFAR-10 \citep{krizhevskyCifar}:}
Finally, to demonstrate that our method can also be applied
on real-world images, we perform experiments on CIFAR-10 dataset.
For Generator network we employ ResNet-style architecture and
for Discriminator and Inverse-Generator we use simpler
convolutional networks. For more architectural details,
please refer to our implementation.

Results and generated
samples are presented in \Cref{fig:cifar_fid_is0} and
\Cref{fig:cifar_examples}. One can observe that increasing
$\lambda$ does not decrease the quality of generated samples.
Indeed, the inception scores as well as the generator losses
depicted in \Cref{fig:cifar_fid_is0} seem to show that the
choice of $\lambda$ has almost no impact on the accuracy.
However, according to the right plot of the same figure,
the generator corresponding to $\lambda=8$ has the smallest
penalty and, therefore, is closer to be left invertible.
\begin{figure*}[htbp]
        \centering
        %\begin{subfigure}[b]{0.9\textwidth}
         %   \centering
            %\includegraphics[width=0.95\textwidth]{figures/cifar10/samples_lambda_0.pdf}
        %\end{subfigure}
        %\begin{subfigure}[b]{0.9\textwidth}
         %   \centering
          %  \includegraphics[width=0.95\textwidth]{figures/cifar10/samples_lambda_1.pdf}
        %\end{subfigure}
        %\begin{subfigure}[b]{0.9\textwidth}
         %   \centering
         %   \includegraphics[width=0.95\textwidth]{figures/cifar10/samples_lambda_4.pdf}
        %\end{subfigure}
        %\begin{subfigure}[b]{0.9\textwidth}
         %   \centering
         %   \includegraphics[width=0.95\textwidth]{figures/cifar10/samples_lambda_8.pdf}
        %\end{subfigure}
        \includegraphics[width = 0.9\textwidth]{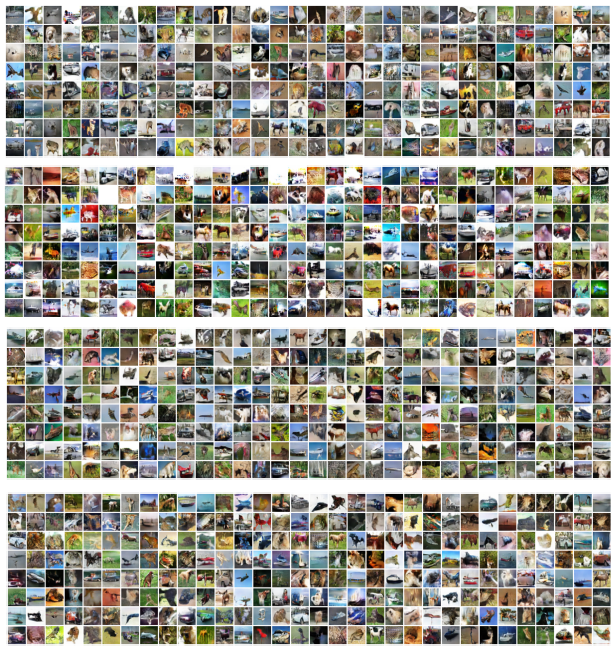}
        \vspace{-10pt}
        \caption{Generated image samples on CIFAR-10 for different values of \ref{LIPERMe} $\lambda =0$; $\lambda =1$; $\lambda=4$, and $\lambda=8$}
        \label{fig:cifar_examples}
\end{figure*}

\section{Summary, Conclusion and Limitations}
\label{sec:5}

In this paper, we have presented a theoretical analysis
of training generative models with two key properties: avoidance of replication of the observed examples and convergence to the true distribution at a minimax optimal rate. Our main contribution is that the existence of a smooth left-inverse implies the first one of these properties. We further introduced the left-inverse-penalized empirical risk minimization \LIPERM1 framework, with a penalty encouraging the generator to possess a smooth inverse. We showed, both theoretically and empirically, that \LIPERM1 and its
approximate version \ref{LIPERMe} enjoy the mentioned desirable properties.
%%%%%%%%%%%%%%%%%%%%%%%%%%%%%%%%%%%%%%%%%%%%%%%%%%%%%%%%%%%%

\textbf{Limitations}  The incorporation of left invertibility poses certain computational challenges. While our numerical experiments indicate that these difficulties are not insurmountable, optimization errors will likely be dominant in most applications. Our work does not explicitly handle choosing $d$,
the dimension of the latent space. One approach to address this is considering the Bourgain theorem \citep{Bourgain}, as in \citep{xiao2018bourgan}.
The choice of $d$ may also influence the Lipschitz constant $L^*$, which plays a pivotal role. The interplay between these quantities needs to be better understood;
the methodology developed in \citep{Jordan20,JordanD21,WangM23} might help in this task.
Extending our analysis to functional classes with higher smoothness and diffusion models is non-trivial. On a related note, considering distances between distributions that are not IPMs, such as the Sinkhorn divergence \cite{GenevayPC18,Luise2020}
cannot be done using the methodology of this paper.
These challenges pose interesting questions for future research.

\section*{Impact Statement}
This paper presents work that aims to advance the field of Machine Learning. There are many potential societal consequences of our work, none which we feel must be specifically highlighted here.

\section*{Acknowledgments}
This work was supported by the grant ANR-11-IDEX0003/Labex Ecodec/ANR-11-LABX-0047, and the ADVANCE Research Grant provided by the Foundation for Armenian Science and Technology (FAST) and Yerevan State University (YSU).

%\vfill
%\newpage

\bibliography{ICML2024_refs}
\newpage

\appendix
\onecolumn

\tableofcontents

\addtocontents{toc}{\protect\setcounter{tocdepth}{3}}

\section*{Appendix}\label{app}
% Start the appendix part
%\parttoc % Insert the appendix TOC
% \tableofcontents

The purpose of this appendix is twofold: to present the proofs of all the mathematical claims presented in the main paper and to provide some additional experimental results. We illustrate the effect of the penalization parameter $\lambda$ on the trained generator using the \ref{LIPERMe} framework. The code
that can be used to reproduce all the experiments can be found here \url{https://github.com/TigranGalstyan/LIPERM_annotated_deep_learning_paper_implementations/tree/liperm/labml_nn/gan/wasserstein/gradient_penalty/liperm}.

\section{Proof of the upper bound on the risk}

The proof of \Cref{thm:upper}, provided below, relies on repeated use of the triangle inequality, the near-minimization property of \ref{LIPERMe}, as well as on exploiting Lipschitz continuity assumptions. The final bound is derived by combining these inequalities with the approximation bound
in the Wasserstein-$1$ distance of the uniform distribution
by its empirical counterpart.

This result demonstrates that under mild assumptions, the generator trained by \ref{LIPERMe}
achieves the optimal rate for any choice of $\lambda$. Therefore, there is flexibility in selecting $\lambda$ to enforce dissimilarity with the training examples without compromising accuracy.

\subsection{Proof of \Cref{thm:upper}}\label{appA:1}

Let $g$ be an arbitrary element from $\mathcal{G}$. The proof begins by using the triangle inequality multiple times, resulting in the following sequence of inequalities:
\begin{align}
   {\sf d}_{\mathcal{F}}(\widehat{g}_{n, \epsilon}\sharp\, \mathcal{U}_d, P^*) & \leq  {\sf d}_{\mathcal{F}}(\widehat{g}_{n, \epsilon}\sharp\, \mathcal{U}_d, P_{n,X}) + {\sf d}_{\mathcal{F}}(P_{n,X}, P^*) \nonumber \\
   & = {\sf d}_{\mathcal{F}}(\widehat{g}_{n, \epsilon}\sharp\, \mathcal{U}_d, P_{n,X}) + \lambda\, \textup{pen}_{\mathcal{H}}(\widehat{g}_{n, \epsilon}) - \lambda\, \textup{pen}_{\mathcal{H}}(\widehat{g}_{n, \epsilon}) + {\sf d}_{\mathcal{F}} (P_{n,X}, P^*) \nonumber \\
   & \leq {\sf d}_{\mathcal{F}}(g\sharp\, \mathcal{U}_d, P_{n,X}) + \lambda\, \textup{pen}_{\mathcal H}( g) + \epsilon - \lambda\, \textup{pen}_{\mathcal{H}}(\widehat{g}_{n, \epsilon}) + {\sf d}_{\mathcal{F}} (P_{n,X}, P^*) \nonumber \\
   & \leq {\sf d}_{\mathcal{F}}(g\sharp\, \mathcal{U}_d, P^*) + \lambda\, \textup{pen}_{\mathcal H}( g) + \epsilon - \lambda\, \textup{pen}_{\mathcal{H}}(\widehat{g}_{n, \epsilon}) + 2{\sf d}_{\mathcal{F}} (P_{n,X}, P^*).
\end{align}

In the second inequality, we use the fact that $\widehat{g}_{n,\eps}$ is an $\eps$-minimizer of (\ref{LIPERM}), whereas in the last line, the triangle inequality is applied. As the left-inverse penalty is always nonnegative, the last display implies:
\begin{align}\label{eq:oracle-ineq}
    {\sf d}_{\mathcal{F}}(\widehat{g}_{n, \epsilon}\sharp\, \mathcal{U}_d, P^*) &\le \inf_{g\in \mathcal{G}} \big\{ {\sf d}_{\mathcal{F}}(g\sharp\, \mathcal{U}_d, P^*) +
    \lambda\, \textup{pen}_{\mathcal H}(g) \big\} + 2{\sf d}_{\mathcal{F}} (P_{n,X}, P^*) + \epsilon \\
    &\le \inf_{g \in \mathcal{G}}  \big\{ {\sf d}_{\mathcal{F}}(g\sharp\, \mathcal{U}_d, P^*) + \lambda\, \textup{pen}_{\mathcal H}
    (g) \big\} + 2{\sf d}_{\mathcal{F}} (P_{n,X}, g^*\sharp\mathcal U_d) + 2\sigma^* + \epsilon.\label{eq:8}
\end{align}
The last line follows from \AssA{L^*}{\sigma^*} and the triangle inequality. By applying \AssA{L^*}{\sigma^*} again, we can establish the existence of independent random variables $\bU_i\sim\mathcal{U}_d$ such that $\mathbb{E}[\|\bX_i-g(\bU_i)\|]\leqslant \sigma^*$, for every $i\in[n]$. This implies that
\begin{align}
    {\sf d}_{\mathcal{F}} (P_{n,X}, g^*\sharp\,
    \mathcal U_d) & = \sup_{f\in\mathcal{F}} \Big|\frac1n \sum_{i=1}^n
    f(\bX_i) - \mathbb E[f\circ g^*(\bU)]\Big|\\
    &= \sup_{f\in\mathcal{F}} \bigg\{\Big|
    \frac1n \sum_{i=1}^n\big(
    f(\bX_i) - f\circ g^*(\bU_i)\big) +
    \frac1n \sum_{i=1}^n\big(f\circ g^*(\bU_i) -
    \mathbb E[f\circ g^*(\bU)]\big)\Big|\bigg\}\\
    &\leqslant \frac1n \sum_{i=1}^n\big\| \bX_i -
    g^*(\bU_i)\big\| + \sup_{\psi \in\text{Lip}_{L^*}}
    \Big|
    \frac1n \sum_{i=1}^n\big(
    \psi(\bU_i) - \mathbb E[\psi(\bU)]\Big|.
\end{align}
Here, we used the fact that $\mathcal{F}\subseteq \text{Lip}_1([0,1]^D\to \mathbb{R})$ and that the composition of a 1-Lipschitz-continuous and an $L^*$-Lipschitz-continuous functions is itself $L^*$-Lipschitz continuous. Taking the expectation and employing the dual formulation of the Wasserstein-$1$ distance, we arrive at
\begin{align}
    \mathbb E[{\sf d}_{\mathcal{F}} (P_{n,X},
    g^*\sharp\, \mathcal U_d)]
    &\leqslant \frac1n \sum_{i=1}^n\mathbb E[\big\| \bX_i -
    g^*(\bU_i)\big\|] + L^*\mathbb E[\wass_1( P_{n,U},
    \mathcal U_d)]\\
    &\leqslant \sigma^* + L^*\mathbb E[\wass_1( P_{n,U},
    \mathcal U_d)].\label{eq:9}
\end{align}
Here, $\hat{P}_{n,U}$ is the empirical distribution of the sample $\bU_1,\ldots,\bU_n$. Finally, using the well-known bound on the error of the empirical distribution in the Wasserstein-$1$ distance  (for example, see \citep[Proposition 1]{weed2022estimation}), we obtain:
\begin{align}\label{wass1}
    \mathbb E[\wass_1( P_{n,U},
    \mathcal U_d)] \leqslant \frac{c\sqrt{d}}{n^{1/d}}.
\end{align}
Combining this bound with (\ref{eq:8}) and
(\ref{eq:9}), we get the stated upper bound.

\section{Proofs for the deviation of the generative distribution from the empirical distribution}

\subsection{Lower bounding the distance between the uniform distribution and any discrete distribution}

\def\ba{\boldsymbol{a}}
\def\by{\boldsymbol{y}}

\begin{proposition}\label{prop:lower_bound}
    Assume that $\mathcal{F}$ contains all the $1$-Lipschitz-continuous functions from $\mathbb{R}^d $ to $\mathbb{R}$. For any set  of points $\ba_1,\ldots,\ba_n\in [0,1]^d$
    and any set of weights $w_1,\ldots,w_n\geq 0$ summing to one, we have
    \begin{align}
        {\sf d}_\mathcal{F}\Big(\mathcal U_d, \sum_{i=1}^n w_i\delta_{\ba_i}\Big)
        \geqslant
        \frac{1}{2 + 2(2{\sf V}_d n)^{1/d}}\ ,
    \end{align}
    where $\mathcal U_d$ is the uniform distribution on $[0,1]^d$
     with ${\sf V}_d = \frac{\pi^{{d}/{2}}}{\Gamma(\frac{d}{2} +1)}$ being the volume of the unit ball\footnote{$\Gamma$ is Euler's gamma function.} in $\mathbb R^d$.
     %and $C_d = 2{\sf V}_d^{1/d}$
\end{proposition}

\begin{proof}
     Let $A = \{\ba_1,\ldots, \ba_n\}$ and  define the function
        \begin{align}
            g_A:[0,1]^d\to\mathbb [0,1],\qquad g_A(\bx) =
            \min\big(1,\min_{\ba\in A}\|\bx-\ba\|\big), \label{eq:condgA}
        \end{align}
    We start the proof by noticing that for any $A \subseteq \mathbb{R}^d$ the function $g_A$ is $1$-Lipschitz. Therefore, we have $\{g_A: A \subseteq \mathbb{R}^d\} \subseteq \text{Lip}_1 \subseteq \mathcal{F}$. Therefore,
    \begin{align}
        {\sf d}_\mathcal{F}\Big(\mathcal U_d, \sum_{i=1}^n w_i\delta_{\ba_i}\Big) &\geq \sup_A \bigg(\int_{[0,1]^d} g_A(\bx)\,d\bx - \sum_{i=1}^n w_i g_A(\ba_i) \bigg)= \sup_A \int_{[0,1]^d} g_A(\bx)\,d\bx.
    \end{align}
    We denote by $B(\bx_0, r)$ the ball in $\mathbb{R}^d$ with center $\bx_0$ and radius $r$, $B(\bx_0, r) = \{ \bx \in \mathbb{R}^d : \| \bx - \bx_0 \| \le r\}$. Let us define $A_i = B(\ba_i,r)$, for $i=1,\ldots,n$.
    Using this notation we arrive at
    \begin{align}
        \int_{[0,1]^d} g_A(\bx)\,d\bx
        &\geq
        \int_{(\cup_{i=1}^n A_i)^c\cap [0,1]^d} \min\big(1,\,\min_{\by\in A}\|\bx-\by\|\big)\,d\bx\\
        &\ge
        \int_{(\cup_{i=1}^n A_i)^c\cap [0,1]^d} \min\big(1,r \big)\,d\bx.
        %\geq (1/2)\min(1,\varepsilon_n).
        \label{eq:minepsn}
    \end{align}

We now state an auxiliary lemma, the proof of which is deferred to the end of the section.
\begin{lemma}\label{lem:2}
Let $S = [0,1]^d$ with $d \in \mathbb{N}$ and let $\mu$ be a probability
measure on $[0,1]^d$ admitting a density (with respect to the Lebesgue measure) bounded by some constant $b<\infty$. For any $r>0$ and $k\in \mathbb N$, if $B_1,\ldots,B_k \subseteq \mathbb R^d$ are balls of radius $r$ such that
\begin{gather}
    \mu\big(B_1 \cup \ldots \cup B_k\big) > {1}/{2}
\end{gather}
then,
\begin{align}
    r^d > \frac{1}{2bk \,{\sf V}_d} \cdot r^{-d},\quad \text{ where }\quad {\sf V}_d = {\rm Vol}(B(\mathbf 0,1)).\label{ineq:L2:1}
\end{align}
\end{lemma}

Let us choose $r = (2n{\sf V_d})^{-1/d}$. This value of $r$
does not satisfy (\ref{ineq:L2:1}). In view of \Cref{lem:2},
this implies that
\begin{gather}
    \mu\big(A_1 \cup \ldots \cup A_n\big)
    \leqslant {1}/{2}.
\end{gather}
Since $\mu = \mathcal U_d$ is a probability measure on $[0,1]^d$,
we get
\begin{gather}
    \mu\big((A_1 \cup \ldots \cup A_n)^c\cap [0,1]^d\big)
    > {1}/{2}.
\end{gather}
Combining this with (\ref{eq:minepsn}), we arrive at
\begin{align}
        {\sf d}_\mathcal{F}\Big(\mathcal U_d, \sum_{i=1}^n w_i\delta_{a_i}\Big)
        \geq (1/2)\min(1,(2{\sf V}_dn)^{-1/d} ).
\end{align}
In other terms, if $n\geq (2{\sf V}_d)^{-1}$, then
\begin{align}
        {\sf d}_\mathcal{F}\Big(\mathcal U_d, \sum_{i=1}^n w_i\delta_{\ba_i}\Big)
        \geq \frac12 (2{\sf V}_d n)^{-1/d} = \frac{1}{2^{(d+1)/d}} ({\sf V}_d n)^{-1/d}  %= \frac{1}{2 C_d n^{1/d}}, \text{ with } C_d = 2{\sf V}_d^{1/d},
\end{align}
otherwise
\begin{align}
        {\sf d}_\mathcal{F}\Big(\mathcal U_d, \sum_{i=1}^n w_i\delta_{\ba_i}\Big)
        \geq \frac12.
\end{align}
Putting together the inequalities from the last two displays concludes the proof of the desired lower bound.
\end{proof}

\begin{proof}[Proof of \Cref{lem:2}]
Let $\nu$ be the Lebesgue measure on $[0,1]^d$.
We know that
    $\mu(A) = \int_A \varphi(x)\, \nu(dx)$ with a probability density function $\varphi$ satisfying $0\leq \varphi(x)\leq b$ for all $x\in [0,1]^d$. Therefore,
\begin{align}
    \frac{1}{2} < \mu(B_1 \cup \ldots \cup B_k) & \leq \sum\limits_{j=1}^k \mu(B_j)
     = \sum\limits_{j=1}^k \int_{B_j} \varphi(x)\,\nu(dx)
     \leq \sum\limits_{j=1}^k b \cdot \nu(B_j) \label{eq:2.1}
\end{align}
Moreover, we know that $\nu(B_j) = {\sf V}_d r^d$ for all $j = 1, \dots k$. Combining this inequality with (\ref{eq:2.1}), we get
\begin{align}
    \frac{1}{2} <  k b \, {\sf V}_d \, r^d.
\end{align}
This yields $r^d > \frac{1}{2 bk \, {\sf V}_d}$.
\end{proof}
%\fi

\subsection{Proof of \Cref{thm:lower-bound-hard}}

    Recall that
    \begin{align}
        {\sf d}_{\mathcal{F}}({g} \sharp\, \mathcal{U}_d, P_{n,X}) = \sup_{f \in \mathcal{F}} \bigg| \int_{[0,1]^d} f({g}(\bu))\,\rmd\bu - \frac{1}{n} \sum\limits_{i=1}^n f(\bX_i) \bigg|.
    \end{align}
    In addition, there exists an $L_{\mathcal H}$-Lipschitz-continuous function $h$ such that $ h\circ g = \Id_d$ and
    \begin{align}
        \| h(\bx)- h(\bx')\|\leqslant L_{\mathcal H}\|\bx- \bx'\|
        \qquad \forall \bx,\bx'\in\mathbb R^D.
    \end{align}
    In order to establish the lower bound on ${\sf d}_{\mathcal{F}}({g} \sharp
    \mathcal{U}_d, P_{n,X})$, we use the fact that $\mathcal F$
    contains all the functions of the form $\psi \circ {h}$, where
    $\psi:[0,1]^d \to \mathbb{R}$ is any $(1/L_{\mathcal H})$-Lipschitz-continuous function.
    Indeed, since $h$ is $L_{\mathcal H}$-Lipschitz, the function $\psi\circ h$
    belongs to $\text{Lip}_1\subseteq \mathcal F$. Therefore,
    \begin{align}
        {\sf d}_{\mathcal{F}}({g} \sharp\, \mathcal{U}_d, P_{n,X})
        &\geqslant \sup_{\psi \in \text{Lip}_{L_{\mathcal H}^{-1}}} \bigg\{ \int_{[0,1]^d} (\psi\circ \underbrace{ h\circ {g}}_{=\Id_d})(\bu)\,\rmd\bu - \frac{1}{n} \sum\limits_{i=1}^n (\psi\circ h)(\bX_i) \bigg\}\\
        & = \sup_{\psi \in \text{Lip}_{L_{\mathcal H}^{-1}}} \bigg\{\int_{[0,1]^d} \psi (\bx)\,\rmd\bx -
        \frac{1}{n} \sum\limits_{i=1}^n \psi(\bZ_i) \bigg\},
    \end{align}
    where we have used the notation $\bZ_i = h(\bX_i)$, for $i=1,\ldots,n$.
    Clearly, $\psi \in \text{Lip}_{L_{\mathcal H}^{-1}}$ is equivalent to
    $L_{\mathcal H}\psi \in \text{Lip}_1$. This implies that
    \begin{align}
        {\sf d}_{\mathcal{F}}({g} \sharp\, \mathcal{U}_d, P_{n,X})
        &\geqslant \frac1{L_{\mathcal H}}\sup_{\psi \in \text{Lip}_1} \bigg\{\int_{[0,1]^d} \psi (\bu)\,\rmd\bu - \frac{1}{n} \sum\limits_{i=1}^n \psi(\bZ_i) \bigg\}.
    \end{align}
    The right-hand side of the inequality above is precisely the
    $\wass_1$ distance between the uniform measure on $[0,1]^d$ and
    the empirical distribution of $\bZ_1, \dots, \bZ_n$. Therefore, we arrive at
    \begin{align}
    {\sf d}_{\mathcal{F}}({g} \sharp\, \mathcal{U}_d, P_{n,X}) \ge \frac{1}{L_{\mathcal H}} \wass_1\Big(\mathcal{U}_d, \frac{1}{n}\sum_{i=1}^n \delta_{\bZ_i}\Big) \ge \frac{1}{2L_{\mathcal H}(1 + (2{\sf V}_d n)^{1/d})},
\end{align}
where the last inequality follows from \Cref{prop:lower_bound}.

\subsection{Proof of \Cref{thm:lower-bound-pen}}

    Let $\hat h_n$ be a function from $\mathcal H$
    attaining the minimum $\min_{h\in\mathcal H}
    \|h\circ\hat g_{n, \epsilon} - \Id_d\|_{\mathbb L_q}^q$.
    Since $\mathcal{F}$ contains all
    $1$-Lipschitz continuous functions, it also
    contains all the functions of the form
    $f = \psi \circ \hat{h}_n$, for $\psi \in \textup{Lip}(1/L_{\mathcal H})$. Using the
    notation $\bZ_i = \hat h_n(\bX_i)$, this
    implies that
    \begin{align}
        {\sf d}_{\mathcal{F}}(\widehat{g}_{n, \epsilon} \sharp\, \mathcal{U}_d, P_{n,X})
        &= \sup_{f \in \mathcal{F}} \bigg| \int_{[0,1]^d} f(\widehat{g}_{n, \epsilon}(\bx))d \bx - \frac{1}{n} \sum\limits_{i=1}^n f(\bX_i) \bigg| \\
        &\geqslant \sup_{\psi \in \text{Lip}_{L_{\mathcal H}^{-1}}} \bigg\{ \int_{[0,1]^d} (\psi\circ  \hat{h}_n\circ \widehat{g}_{n, \epsilon})(\bx)\,d\bx - \frac{1}{n} \sum\limits_{i=1}^n (\psi\circ \hat{h}_n)(\bX_i) \bigg\}\\
        &\geqslant \frac1{L_{\mathcal H}}
        \sup_{\psi \in \text{Lip}_1} \bigg\{ \int_{[0,1]^d} (\psi\circ \underbrace{ \hat{h}_n\circ \widehat{g}_{n, \epsilon}, }_{\approx\Id_d})(\bu)\,d\bu - \frac{1}{n} \sum\limits_{i=1}^n \psi(\bZ_i) \bigg\}.
        \label{eq:5}
    \end{align}
    By adding and subtracting the term $\int_{
    [0,1]^d} \psi(\bu)\,d\bu$, we arrive at
    \begin{align}
        {\sf d}_{\mathcal{F}}(\widehat{g}_{n, \epsilon} \sharp\, \mathcal{U}_d, P_{n,X})
        & \ge \frac1{L_{\mathcal{H}}}
        \sup_{\psi \in \text{Lip}_1}
        \bigg|\int_{[0,1]^d} \psi (\bu)\,d\bu - \frac{1}{n} \sum\limits_{i=1}^n \psi(\bZ_i) \bigg|
        - \frac1{L_{\mathcal{H}}}  \sup_{\psi \in \text{Lip}_1}
        \big\|\psi (\hat{h}_n \circ \widehat{g}_{n, \epsilon}
        ) - \psi\big\|_{\mathbb L_1}\\
        &\geqslant \frac1{L_{\mathcal{H}}} \Big(
        \wass_1(\mathcal{U}_d,\hat P_{n,Z})
        -
        \big\|\hat{h}_n \circ \widehat{g}_{n, \epsilon} -
        \Id_d\big\|_{\mathbb L_1}\Big)\\
        &\geq \frac1{L_{\mathcal{H}}} \Big(
        \wass_1(\mathcal{U}_d,\hat P_{n,Z})
        -   \textup{pen}_{\mathcal H}(\hat g_{n, \epsilon})^{1/q}\Big).
        \label{eq:decomposition1}
    \end{align}
    Here, the second inequality follows from the
    Lipschitz-continuity of $\psi$, whereas the
    last inequality is a consequence of the facts that
    the $\mathbb L_1$ norm on $[0,1]^d$ is dominated
    by the $\mathbb L_q$ norm given $q\ge 1$, and  that $\hat h_n$
    is a minimizer of $\|h\circ \hat g_n - \Id_d\|_q$
    over $\mathcal H$.
    We use the same lower bound here
    \begin{align}\label{eq:lower-bound-term1}
        \wass_1(\mathcal{U}_d,\hat P_{n,Z})
        \ge \frac{1}{2+2(2{\sf V}_dn)^{1/d}}.
    \end{align}
    To complete the proof we need to find an upper bound on the second term of the right-hand side of (\ref{eq:decomposition1}).
    Given (\ref{LIPERM}), we know that for any
    $g \in \mathcal{G}$ it holds
    \begin{align}
        {\sf d}_{\mathcal{F}}(\widehat{g}_{n, \epsilon}\sharp\, \mathcal{U}_d, P_{n,X}) + \lambda\, \textup{pen}_{\mathcal H}( \widehat{g}_{n, \epsilon}) &\le {\sf d}_{\mathcal{F}}(g\sharp\, \mathcal{U}_d, P_{n,X}) + \lambda\, \textup{pen}_{\mathcal H}(g) + \epsilon.
    \end{align}
    Since the distance ${\sf d}_{\mathcal F}$ is
    always nonnegative, by choosing $g$ from
    $\mathcal G_0$ the last term of the last
    display vanishes and we get
    \begin{align}\label{eq:penalty-bound}
        \lambda\,\textup{pen}_{\mathcal H}(\widehat{g}_{n, \epsilon})
        \leq  \inf_{g \in \mathcal{G}_0}
        {\sf d}_{\mathcal{F}}(g\sharp\, \mathcal{U}_d, P_{n,X}) + \epsilon.
    \end{align}
    Combining inequalities (\ref{eq:decomposition1}),
    (\ref{eq:lower-bound-term1}) and
    (\ref{eq:penalty-bound})
    we obtain the claim of the theorem.

\subsection{Proof of inequality (\ref{eq:6})}
\label{AppB:4}

Using the fact that $g^*\in \mathcal G_{\mathcal H}$ and
the triangle inequality, we get
\begin{align}
    \inf_{\mathcal{G}_{\mathcal H}} \mathbb E\big[
\wass_1(g\sharp\, \mathcal{U}_d, P_{n,X})\big]
&\leq \mathbb E\big[
\wass_1(g^*\sharp\, \mathcal{U}_d, P_{n,X})
\big]\\
&\leq \mathbb E\big[\wass_1(g^*\sharp\, \mathcal{U}_d, g^*\sharp P_{n,U})\big] + \mathbb E\big[\wass_1(g^*\sharp P_{n,U}, P_{n,X}) \big].\label{eq:6:1}
\end{align}
Let $\bU_i\sim \mathcal U_d$ be $n$ iid random vectors
drawn from the uniform distribution (they will be defined
more specifically later in the proof). Let $ P_{n,U}$ be
the empirical distribution of $\bU_1,\ldots,\bU_n$. Recall
that   (see, for example, \citep[Proposition 1]{weed2022estimation})
\begin{align}\label{wass1b}
    \mathbb E[\wass_1(\mathcal U_d,P_{n,U})] \leqslant \frac{c\sqrt{d}}{n^{1/d}}.
\end{align}
This implies that
\begin{align}
    \mathbb E\big[\wass_1(g^*\sharp\, \mathcal{U}_d, g^*\sharp P_{n,U})\big] & \leqslant L^*
    \mathbb E\big[\wass_1(\mathcal{U}_d,  P_{n,U})\big]
    \leqslant \frac{cL^*\sqrt{d}}{n^{1/d}}. \label{eq:6:2}
\end{align}
On the other hand, it is clear that
\begin{align}
    \mathbb E[\wass_1 ( g^*\sharp P_{n,U}, P_{n,X})] &\leq \mathbb E\bigg[(1/n)\sum_{i=1}^n \|g^*(\bU_i)  -\bX_i\|
    \bigg]\\
    & =  \mathbb E\big[\|g^*(\bU_1)  -\bX_1\|\big].\label{eq:6:3}
\end{align}
If we assume now that $\bU_1$ is chosen in such a way
that the joint distribution of $g^*(\bU_1)$ and $\bX_1$
is the optimal coupling between the marginal distributions
of these two random vectors, we get
\begin{align}
    \mathbb E\big[\|g^*(\bU_1)  -\bX_1\|\big] =
    \wass_1(g^*\sharp \mathcal U_d,P^*) \leqslant \sigma^*.
    \label{eq:6:4}
\end{align}
Combining (\ref{eq:6:3}) and (\ref{eq:6:4}), we get
\begin{align}
    \mathbb E[\wass_1 ( g^*\sharp P_{n,U}, P_{n,X})] &\leq
    \sigma^*.\label{eq:6:5}
\end{align}
Finally, inequalities (\ref{eq:6:2}) and (\ref{eq:6:5}),
in conjunction with (\ref{eq:6:1}), yield
\begin{align}
    \inf_{\mathcal{G}_{\mathcal H}} \mathbb E\big[
\wass_1(g\sharp\, \mathcal{U}_d, P_{n,X})\big]
&\leq \frac{cL^*\sqrt{d}}{n^{1/d}} + \sigma^*.
\label{eq}
\end{align}

\section{Proof of the risk bound when trained with approximated functional classes }

Although \Cref{prop:3} was stated for the case $q=2$
only, we provide the proof for any $q\geqslant 1$.
Replacing $q$ by $2$ in the final expression of
this proof leads to the claim of the proposition.

\subsection{Proof of \Cref{prop:3}}
We consider a trained generator $\widehat{g}_{n,\varepsilon}^0$ satisfying
\begin{align}\label{ineq:P2:1}
    {\sf d}_{\mathcal{F}_0} (\widehat{g}^0_{n,\varepsilon}\sharp\, U_d, P_{n,X}) &+ \textup{pen}_{\mathcal{H}_0} (\widehat{g}_{n,\varepsilon}^0)  \leq {\sf d}_{\mathcal{F}_0} (g \sharp\, U_d, P_{n,X}) + \textup{pen}_{\mathcal{H}_0} (g)
    + \varepsilon,\qquad
    \text{for all } g \in \mathcal{G}_0
\end{align}
and our goal is to upper bound the
expression ${\sf d}_{\mathcal{F}} (\widehat{g}^0_{n,\varepsilon}\sharp\, U_d, P_{n,X})$.
Using \Cref{lemma:1}, we have
\begin{align}
    {\sf d}_{\mathcal{F}} (\widehat{g}^0_{n,\varepsilon}\sharp\, U_d, P_{n,X})
    &\leqslant  {\sf d}_{\mathcal{F}_0} (\widehat{g}^0_{n,\varepsilon}\sharp\, U_d, P_{n,X}) + 2\delta_{\mathcal F}. \label{ineq:P2:2}
\end{align}
The first term of the
right-hand side can be upper bounded
as follows:
\begin{align}
    {\sf d}_{\mathcal{F}_0} (\widehat{g}^0_{n,\varepsilon}\sharp\, U_d, P_{n,X})
    &\leq {\sf d}_{\mathcal{F}_0} (\widehat{g}^0_{n,\varepsilon}\sharp\, U_d, P_{n,X}) + \lambda \textup{pen}_{\mathcal H_0}(\widehat{g}^0_{n,\varepsilon}) \\
    &\leq \inf_{g_0\in\mathcal G_0}\Big({\sf d}_{\mathcal{F}_0} (g_0\sharp\, U_d, P_{n,X}) + \lambda \textup{pen}_{\mathcal H_0}(g_0)\Big) + \varepsilon\\
    &\leq \inf_{g_0\in\mathcal G_0}\Big({\sf d}_{\mathcal{F}_0} (g_0\sharp\, U_d, P_{n,X}) + \lambda \textup{pen}_{\mathcal H}(g_0)\Big) + \varepsilon + q d^{(q-1)/2}\delta_{\mathcal H}\\
    &\leq \inf_{g_0\in\mathcal G_0}\Big({\sf d}_{\mathcal{F}} (g_0\sharp\, U_d, P_{n,X}) + \lambda \textup{pen}_{\mathcal H}(g_0)\Big) + \varepsilon + q d^{(q-1)/2}\delta_{\mathcal H}.\label{ineq:P2:3}
\end{align}
where we used the positiveness of the penalty function
for the first inequality, inequality (\ref{ineq:P2:1})
for the second inequality, \Cref{lemma:2} for the
third inequality and the fact that $\mathcal F_0\subseteq \mathcal F$ for the fourth inequality.

The last step is to use \Cref{lemma:3}, which allows to
upper bound the inf over $\mathcal G_0$ by the inf over
$\mathcal G$, modulo an additive error term proportional
to $\delta_{\mathcal G}$. More precisely,
\begin{align}
    \inf_{g_0\in\mathcal G_0}\Big({\sf d}_{\mathcal{F}} (g_0\sharp\, U_d, P_{n,X}) + \lambda \textup{pen}_{\mathcal H}(g_0)\Big)
    &\leqslant
    \inf_{g\in\mathcal G}\Big({\sf d}_{\mathcal{F}} (g\sharp\, U_d, P_{n,X}) + \lambda \textup{pen}_{\mathcal H}(g)\Big)  + \big(
    1+ \lambda q d^{(q-1)/2} L_{\mathcal H}\big)\delta_{\mathcal G}.
    \label{ineq:P2:4}
\end{align}
Combining (\ref{ineq:P2:2}), (\ref{ineq:P2:3}) and
(\ref{ineq:P2:4}), we get the inequality
\begin{align}
    {\sf d}_{\mathcal{F}} (\widehat{g}^0_{n,\varepsilon}\sharp\, U_d, P_{n,X})
    \leqslant \inf_{g\in\mathcal G}\Big({\sf d}_{\mathcal{F}} (g\sharp\, U_d, P_{n,X}) + \lambda \textup{pen}_{\mathcal H}(g)\Big)  + \eps + \underbrace{2\delta_{\mathcal F} +  q d^{(q-1)/2}\delta_{\mathcal H}+\big(
    1+ \lambda q d^{(q-1)/2} L_{\mathcal H}\big)\delta_{\mathcal G}}_{=:\delta}.
\end{align}
This completes the proof.

\subsection{Impact of approximating $\mathcal F$ on the IPM}

\begin{lemma}\label{lemma:1} If $\mathcal{F}_0$ is such that  $\inf\limits_{f_0\in\mathcal{F}_0}  \| f - f_0\| \leq \delta$ for every $f\in\mathcal{F}$, then
\begin{align}
    {\sf d}_\mathcal{F} (P,Q) - {\sf d}_{\mathcal{F}_0} (P,Q) \leq 2\delta \quad \textup{for all distributions} \quad P, Q.
\end{align}
\end{lemma}

\begin{proof} Recall the definition of ${\sf d}_\mathcal{F} (P, Q) = \sup_{f\in\mathcal{F}} \big|\mathbb{E}_P[f(X)]-\mathbb{E}_Q[f(X)]\big|$. This implies that
\begin{align}
    {\sf d}_{\mathcal{F}} (P,Q) - {\sf d}_{\mathcal{F}_0} (P,Q)  &
    =  \sup_{f\in \mathcal{F}} \inf_{f_0 \in \mathcal{F}_0} \Big(\underbrace{\big| \mathbb{E}_P[f(X)] - \mathbb{E}_Q[f(X)] \big|}_{\text{independent of $f_0$}} - \underbrace{\big| \mathbb{E}_P[f_0(X)] - \mathbb{E}_Q[f_0(X)] \big|}_{\text{independent of $f$}}
    \Big)\\
    &\leq \sup_{f\in \mathcal{F}} \inf_{f_0 \in \mathcal{F}_0} \big| \mathbb{E}_P[f(X)] - \mathbb{E}_Q[f(X)] - \mathbb{E}_P[f_0(X)] - \mathbb{E}_Q[f_0(X)] \big|\quad{\color{gray}(|a|-|b|\leqslant |a-b|)}\\
    &\leq \sup_{f\in\mathcal F}\inf_{f_0 \in \mathcal{F}_0} \Big(\big| \mathbb{E}_P[f(X)] - \mathbb{E}_P[f_0(X)]\big| + \big|\mathbb{E}_Q[f(X)]  - \mathbb{E}_Q[f_0(X)] \big|\Big) \qquad {\color{gray} (\text{triangle ineq.})}\\
    &\leq \sup_{f\in\mathcal F}\inf_{f_0 \in \mathcal{F}_0} \Big( \mathbb{E}_P[\underbrace{|f(X) - f_0(X)|}_{\leqslant \|f-f_0\|_\infty}] + \mathbb{E}_Q[\underbrace{|f(X) - f_0(X)|}_{\leqslant \|f-f_0\|_\infty}] \Big)\\
    &\leq \sup_{f\in\mathcal F}\inf_{f_0 \in \mathcal{F}_0}
    \|f-f_0\|_\infty \leq 2\delta.
\end{align}
This completes the proof of the lemma.
\end{proof}

\subsection{Impact of approximating $\mathcal H$ on the Left-Inverse-Penalty}

Before analyzing the sensitivity of the left-inverse-penalty to the deviations from $\mathcal H$, we need
an auxiliary lemma.
\begin{lemma}\label{prop:aq_bq}
If $a,b$ are arbitrary numbers from some interval $[0,C]$,
and $q\geqslant 1$, then
\begin{align}
    |a^q - b^q| \leq  qC^{q-1}|b-a|.
\end{align}
\end{lemma}
\begin{proof}
Let us first assume that $c\in [0,1]$. For any $q\geqslant 1$,
\begin{align}
    |c^q - 1| \leqslant q|c-1|.
\end{align}
Then for $a,b\in \mathbb R$ such that $0\leqslant a\leqslant b$, we have
\begin{align}
    |a^q - b^q| = b^q\big|(a/b)^q - 1 \big| \leq
    q b^q\Big|\frac{a}{b} - 1 \Big| = qb^{q-1}|b-a|.
\end{align}
The claim of the lemma follows by upper bounding
$b$ by $C$.
\end{proof}

\begin{lemma}\label{lemma:2} If
$\mathcal H_0$ is such that $\min\limits_{h_0\in\mathcal{H}_0} \|h-h_0\|_\infty
\leq \delta$ for all $h \in \mathcal{H}$, then
\begin{align}
    \textup{pen}_{\mathcal{H}_0} (g) - \textup{pen}_{\mathcal{H}} (g)\leq qd^{(q-1)/2}\delta,\qquad
    \text{for all }g\in\mathcal G.
\end{align}
\end{lemma}

\begin{proof}
Recall that
$\textup{pen}_{\mathcal{H}} (g) = \min_{h\in \mathcal{H}} \|h\circ g - \Id_d\|_{\mathbb L_q}^q $. This yields
\begin{align}
    \textup{pen}_{\mathcal{H}_0} (g) - \textup{pen}_{\mathcal{H}} (g)
    &=  \min_{h_0\in \mathcal{H}_0} \| h_0\circ g - \Id_d \|_{\mathbb L_q}^q - \min_{h\in \mathcal{H}} \| h\circ g - \Id_d \|_{\mathbb L_q}^q  \\[5pt]
    &=  \max_{h\in \mathcal{H}} \min_{h_0\in \mathcal{H}_0} \Big(\| h_0\circ g - \Id_d \|_{\mathbb L_q}^q - \| h\circ g - \Id_d \|_{\mathbb L_q}^q \Big)
\end{align}
We apply \Cref{prop:aq_bq} with
\begin{align}
    a &=  \| h_0\circ g - \Id_d \|_{\mathbb L_q}
    =\bigg(\int_{[0,1]^d} \big\|\underbrace{h_0(g(u))}_{\in [0,1]^d} - \underbrace{u}_{\in[0,1]^d}\|^q\,\rmd u\bigg)^{1/q}\leqslant \sqrt{d}\\
    b &=  \| h\circ g - \Id_d \|_{\mathbb L_q}\leqslant \sqrt{d}.
\end{align}
This leads to
\begin{align}
    \| h_0\circ g - \Id_d \|_{\mathbb L_q}^q
    - \| h\circ g - \Id_d \|_{\mathbb L_q}^q
    &\leqslant q d^{(q-1)/2} \Big| \| h_0\circ g - \Id_d \|_{\mathbb L_q}
    - \| h\circ g - \Id_d \|_{\mathbb L_q}\Big|
    \quad{\color{gray}(|\|a\|-\|b\||\leqslant \|a-b\|)}\\
    &\leqslant q d^{(q-1)/2} \| h_0\circ g - \Id_d - h\circ g + \Id_d \|_{\mathbb L_q}\\
    &\leqslant q d^{(q-1)/2} \| h_0\circ g  - h\circ g  \|_{\mathbb L_q}\\
    &= q d^{(q-1)/2} \| h_0  - h \|_{\infty}\leqslant q d^{(q-1)/2} \delta.
\end{align}
This completes the proof of the lemma
\end{proof}

\subsection{Impact of approximating $\mathcal G$ on
the Penalized Empirical Risk}

As in the main text, here also we assume that
the elements of the set $\mathcal H$ are all
Lipschitz-continuous with a Lipschitz constant
bounded by $L_{\mathcal H}$.

\begin{lemma}\label{lemma:3}
    Let $P$ be an arbitrary distribution and
    $\mathcal F\subseteq \textup{Lip}_1(\mathbb R^D\to\mathbb R)$.
    If $\mathcal G_0$ is such that $\min_{g_0\in \mathcal G_0} \|g-g_0\|_\infty \leqslant \delta$ for every
    $g\in\mathcal G$, then the following is true:
    \begin{align}
        \min_{g_0\in \mathcal G_0}\Big({\sf d}_{\mathcal{F}} (g_0\sharp\, U_d, P)
        + \lambda \textup{pen}_{\mathcal H}(g_0)\Big)
        \leqslant \inf_{g\in \mathcal G}
        {\sf d}_{\mathcal{F}} (g\sharp\, U_d, P)
        + \lambda \textup{pen}_{\mathcal H}(g) +
        \big( 1 + \lambda q d^{(q-1)/2} L_{\mathcal H}\big)\delta.
    \end{align}
\end{lemma}
\begin{proof}
    Let $g$ be any function from $\mathcal G$ and
    $g_0$ be the element of $\mathcal G_0$ satisfying
    $\|g-g_0\|_\infty \leqslant \delta$. On the one hand, for this function $g_0$, we have
    \begin{align}
        {\sf d}_{\mathcal{F}} (g_0\sharp\, U_d, P) -
        {\sf d}_{\mathcal{F}} (g\sharp\, U_d, P) &=
        \sup_{f\in\mathcal F}\Big| \mathbb E_{\mathcal U_d}[f(g_0(U))] - \mathbb E_P[f(X)]\Big| - \sup_{f\in\mathcal F}\Big| \mathbb E_{\mathcal U_d}[f(g(U))] - \mathbb E_P[f(X)]\Big|\\
        &\leqslant \sup_{f\in\mathcal F}\Big| \mathbb E_{\mathcal U_d}\Big[f(g_0(U)) - f(g(U))\Big]\Big| \\
        &\leqslant \sup_{f\in\mathcal F} \mathbb E_{\mathcal U_d}\Big[\Big|f(g_0(U)) - f(g(U))\Big|\Big] \\
        &\leqslant \sup_{f\in\mathcal F} \mathbb E_{\mathcal U_d}\Big[\big\|g_0(U) - g(U)\big\|\Big] \leqslant \delta.\label{L4:ineq2}
    \end{align}
    Here, the inequality of the second line follows from
    $\sup |F|- \sup |G| \leqslant \sup (|F|-|G|) \leqslant
    \sup |F-G|$, while the first inequality of the last line
    is a consequence of the assumption that the functions
    from $\mathcal F$ are all $1$-Lipschitz.

    On the other hand,
    \begin{align}
        \textup{pen}_{\mathcal H} (g_0) -
        \textup{pen}_{\mathcal H} (g) &=
        \min_{h\in\mathcal H} \|h\circ g_0 - \Id_d\|_{\mathbb L_q}^q - \min_{h\in\mathcal H} \|h\circ g - \Id_d\|_{\mathbb L_q}^q\\
        &\leq \|h^*\circ g_0 - \Id_d\|_{\mathbb L_q}^q -  \|h^*\circ g - \Id_d\|_{\mathbb L_q}^q,
    \end{align}
    where $h^*$ is the minimizer of $\|h\circ g -
    \Id_d\|_{\mathbb L_q}^q$ over $\mathcal H$. Combining with \Cref{prop:aq_bq}, we get
    \begin{align}
        \textup{pen}_{\mathcal H} (g_0) -
        \textup{pen}_{\mathcal H} (g) &\leqslant
        q d^{(q-1)/2} \Big|\|h^*\circ g_0 - \Id_d\|_{\mathbb L_q} - \|h^*\circ g - \Id_d\|_{\mathbb L_q}\Big|\\
        &\leqslant q d^{(q-1)/2} \|h^*\circ g_0 - \Id_d - h^*\circ g + \Id_d\|_{\mathbb L_q}\\
        &= q d^{(q-1)/2} \|h^*\circ g_0 - h^*\circ g \|_{\mathbb L_q}\\
        &\leqslant q d^{(q-1)/2} \|h^*\circ g_0 - h^*\circ g \|_\infty\\
        &\leqslant q d^{(q-1)/2} L_{\mathcal H}
        \| g_0 - g \|_\infty\\
        &\leqslant q d^{(q-1)/2} L_{\mathcal H} \delta.\label{L4:ineq3}
    \end{align}
    Combining (\ref{L4:ineq2}) and (\ref{L4:ineq3}), we get
    that for any $g\in\mathcal G$, there is a $g_0\in\mathcal G_0$ such that
    \begin{align}
        {\sf d}_{\mathcal{F}} (g_0\sharp\, U_d, P)
        + \lambda \textup{pen}_{\mathcal H}(g_0)
        \leqslant
        {\sf d}_{\mathcal{F}} (g\sharp\, U_d, P)
        + \lambda \textup{pen}_{\mathcal H}(g) +
        \big( 1 + \lambda q d^{(q-1)/2} L_{\mathcal H}\big)\delta.
    \end{align}
    This completes the proof of the lemma.
\end{proof}

\section{Numerical experiments}

This section contains some  tables and plots that we could not
include in the main paper due to the space restrictions. Recall
that our experiments were done on three data sets: Swiss roll, MNIST and CIFAR-10. In these experiments, we trained a distribution using the Wasserstein GAN with a left inverse penalty.

\begin{figure}[htbp]
    \centering

    \begin{subfigure}{0.24\textwidth}
        \includegraphics[width=\linewidth]{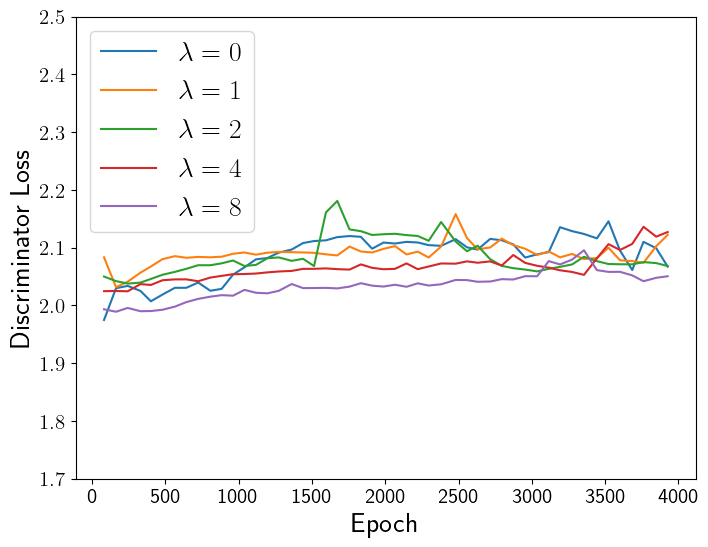}
        \caption{Discriminator Loss}
        \label{subfig:a}
    \end{subfigure}
    \hfill
    \begin{subfigure}{0.24\textwidth}
        \includegraphics[width=\linewidth]{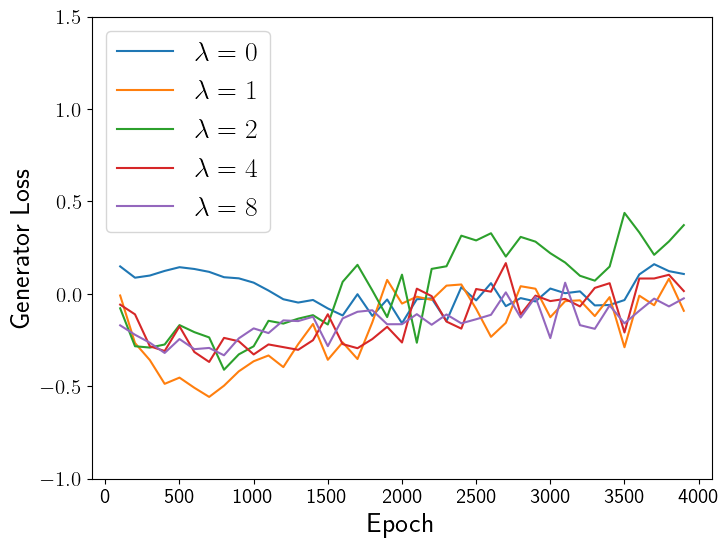}
        \caption{Generator Loss}
        \label{subfig:b}
    \end{subfigure}
    \hfill
    \begin{subfigure}{0.24\textwidth}
        \includegraphics[width=\linewidth]{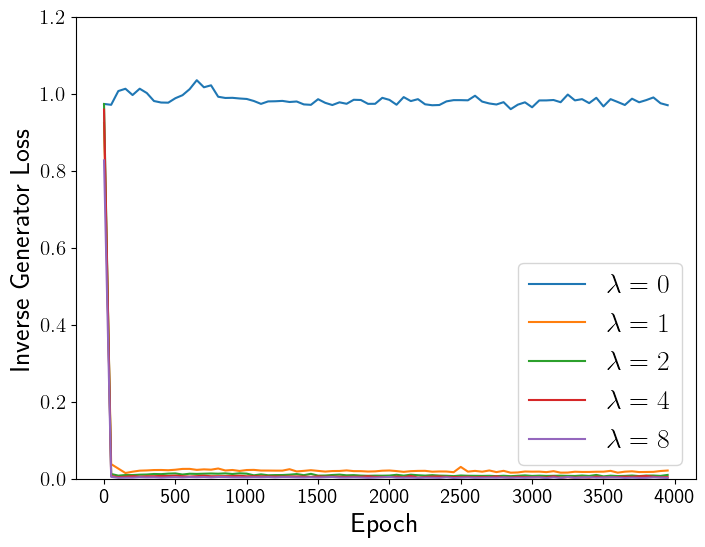}
        \caption{Inverse Generator Loss}
        \label{subfig:c}
    \end{subfigure}
    \hfill
    \begin{subfigure}{0.24\textwidth}
        \includegraphics[width=\linewidth]{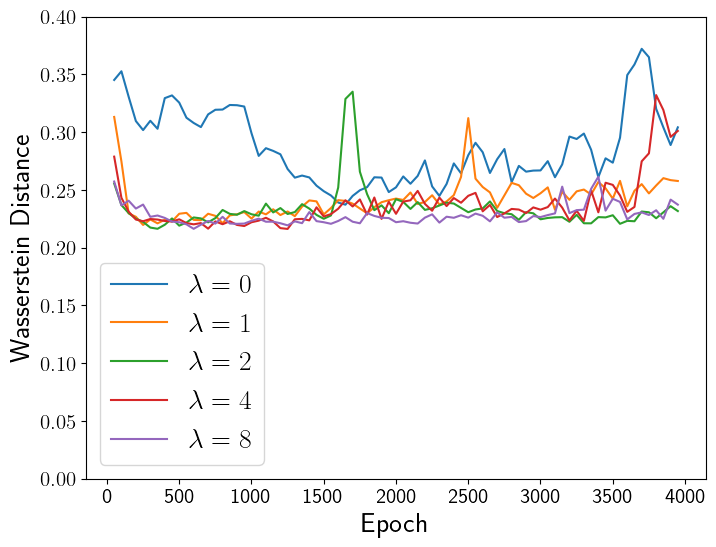}
        \caption{Wasserstein distance}
        \label{subfig:c}
    \end{subfigure}
    \vspace*{-10pt}
    \caption{Evolution of various losses across the iterations in the experiment on Swiss Roll data generated with the
    noise magnitude $\sigma = 1.5$.}
    \label{fig:swiss_roll_gen2}
\end{figure}

\begin{figure}[htbp]
    \centering
    \begin{subfigure}{0.18\textwidth}
        \includegraphics[width=\linewidth]{newest_lambda_0.png}
        \caption{$\lambda=0$. }
        \label{subfig:a}
    \end{subfigure}
    \hfill
    \begin{subfigure}{0.18\textwidth}
        \includegraphics[width=\linewidth]{newest_lambda_1.png}
        \caption{$\lambda=1$.}
        \label{subfig:b}
    \end{subfigure}
    \hfill
    \begin{subfigure}{0.18\textwidth}
        \includegraphics[width=\linewidth]{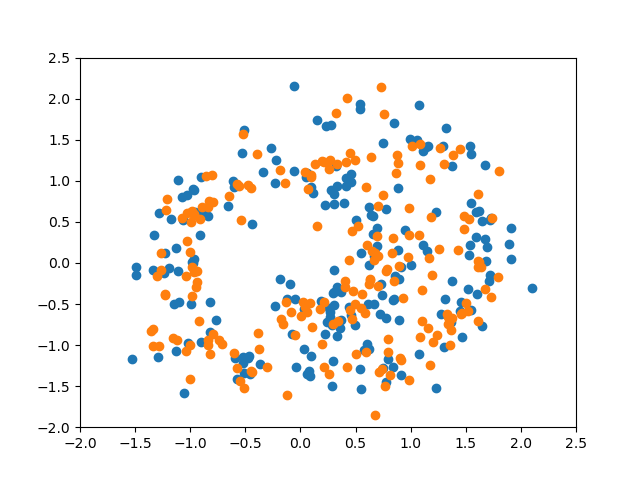}
        \caption{$\lambda=2$.}
        \label{subfig:c}
    \end{subfigure}
    \hfill
    \begin{subfigure}{0.18\textwidth}
        \includegraphics[width=\linewidth]{newest_lambda_4.png}
        \caption{$\lambda=4$.}
        \label{subfig:c}
    \end{subfigure}
    \hfill
    \begin{subfigure}{0.18\textwidth}
        \includegraphics[width=\linewidth]{newest_lambda_8.png}
        \caption{$\lambda=8$.}
        \label{subfig:d}
    \end{subfigure}

    \begin{subfigure}{0.18\textwidth}
        \includegraphics[width=\linewidth]{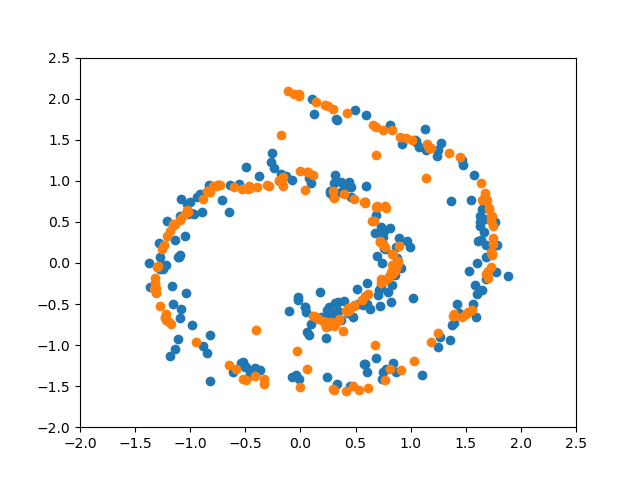}
        \caption{$\lambda=0$. }
        \label{subfig:a}
    \end{subfigure}
    \hfill
    \begin{subfigure}{0.18\textwidth}
        \includegraphics[width=\linewidth]{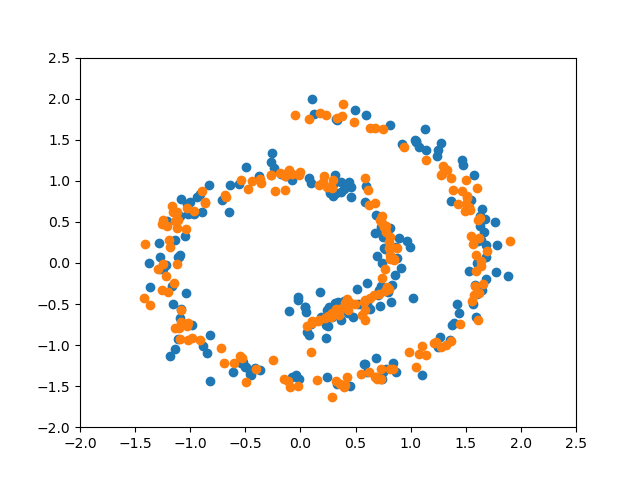}
        \caption{$\lambda=1$.}
        \label{subfig:b}
    \end{subfigure}
    \hfill
    \begin{subfigure}{0.18\textwidth}
        \includegraphics[width=\linewidth]{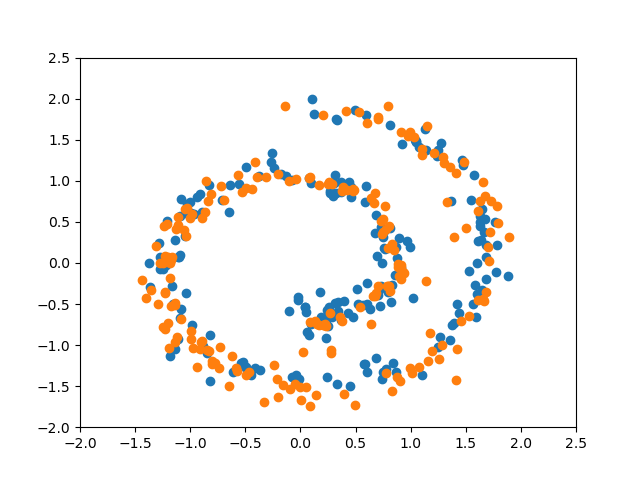}
        \caption{$\lambda=2$.}
        \label{subfig:c}
    \end{subfigure}
    \hfill
    \begin{subfigure}{0.18\textwidth}
        \includegraphics[width=\linewidth]{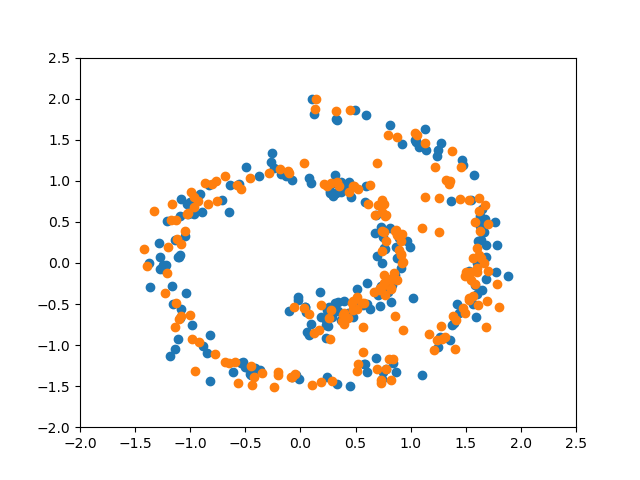}
        \caption{$\lambda=4$.}
        \label{subfig:c}
    \end{subfigure}
    \hfill
    \begin{subfigure}{0.18\textwidth}
        \includegraphics[width=\linewidth]{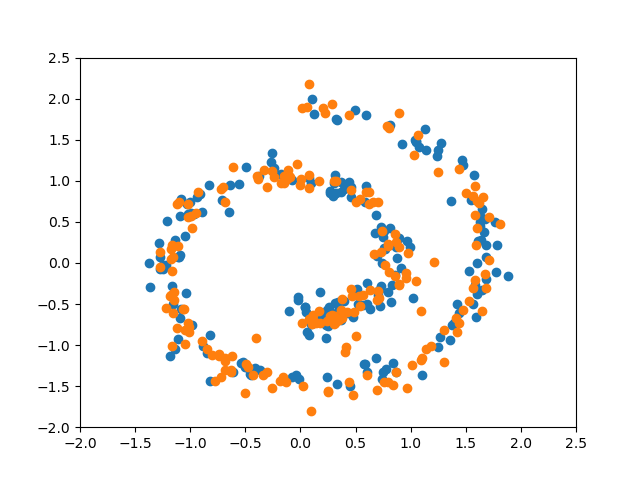}
        \caption{$\lambda=8$.}
        \label{subfig:d}
    \end{subfigure}

    \begin{subfigure}{0.18\textwidth}
        \includegraphics[width=\linewidth]{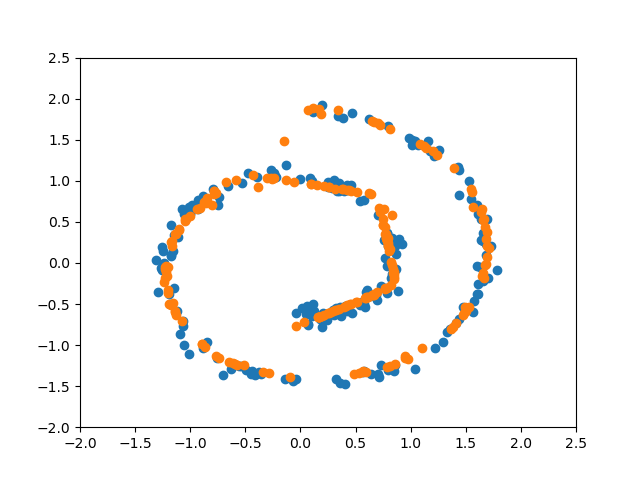}
        \caption{$\lambda=0$. }
        \label{subfig:a}
    \end{subfigure}
    \hfill
    \begin{subfigure}{0.18\textwidth}
        \includegraphics[width=\linewidth]{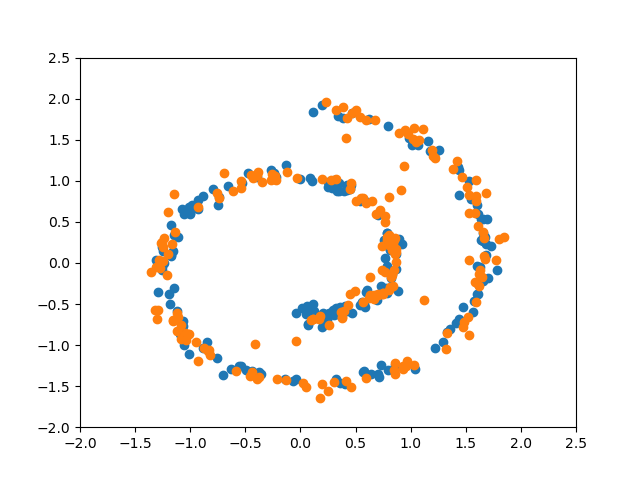}
        \caption{$\lambda=1$.}
        \label{subfig:b}
    \end{subfigure}
    \hfill
    \begin{subfigure}{0.18\textwidth}
        \includegraphics[width=\linewidth]{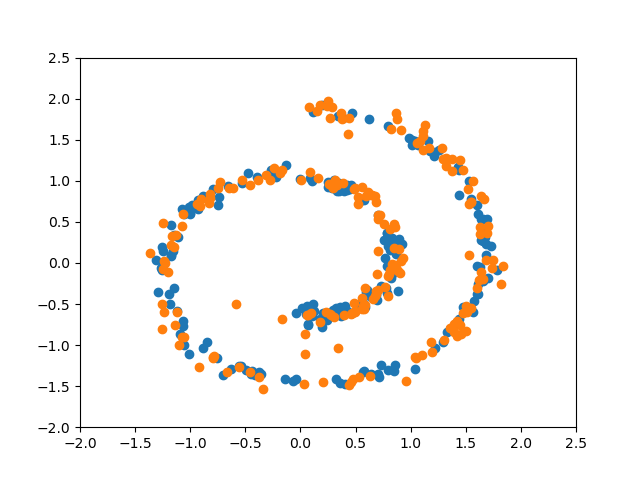}
        \caption{$\lambda=2$.}
        \label{subfig:c}
    \end{subfigure}
    \hfill
    \begin{subfigure}{0.18\textwidth}
        \includegraphics[width=\linewidth]{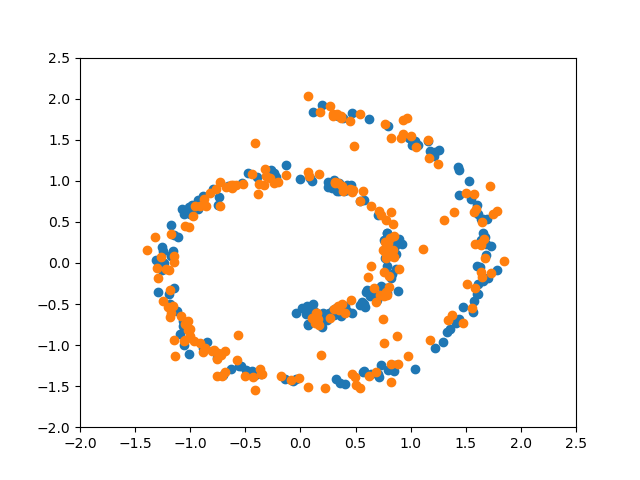}
        \caption{$\lambda=4$.}
        \label{subfig:c}
    \end{subfigure}
    \hfill
    \begin{subfigure}{0.18\textwidth}
        \includegraphics[width=\linewidth]{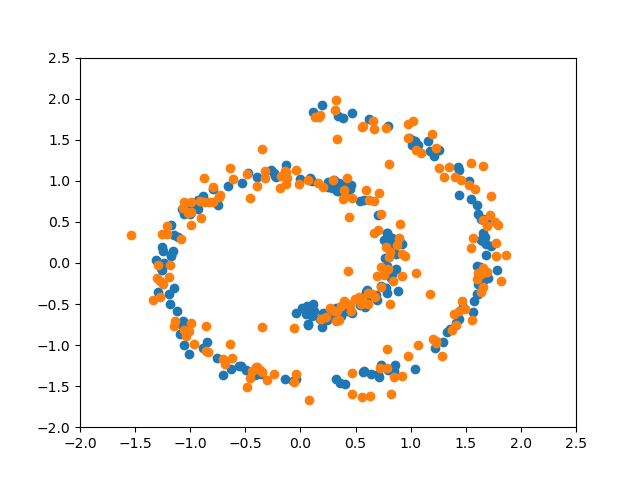}
        \caption{$\lambda=8$.}
        \label{subfig:d}
    \end{subfigure}

    \caption{Swiss Roll data: training set
    and examples generated from the learned distribution,
    when the noise magnitude is $\sigma = 3/2$ (top row),
    $\sigma= 3/4$ middle row and $\sigma = 3/8$ (bottom row). LIPERM was trained with $\lambda = 0,1,2,4,8$.}
    \label{fig:swiss_large_sigma}
\end{figure}

\begin{table}[h!]
    \begin{minipage}{0.65\textwidth}
        \centering
        \newcommand{\inputnum}{2}
        % Hidden layer neurons' number
        \newcommand{\hiddennum}{5}
        % Output layer neurons' number
        \newcommand{\outputnum}{2}
        \begin{tcolorbox}
        \hspace*{-15pt}
        \begin{tikzpicture}[scale=0.9,font=\footnotesize]
        % Input Layer
        \foreach \i in {1,...,\inputnum}
        {
            \node[circle,
                minimum size = 6mm,
                fill=orange!30] (Input-\i) at (0,-\i) {\tiny $U^\i$};
        }

        % Hidden Layer 1
        \foreach \i in {1,...,\hiddennum}
        {
            \node[circle,
                minimum size = 6mm,
                fill=teal!50,
                yshift=(\hiddennum-\inputnum)*5 mm
            ] (Hidden1-\i) at (2.5,-\i) {\tiny ReLU};
        }

        % Hidden Layer 2
        \foreach \i in {1,...,\hiddennum}
        {
            \node[circle,
                minimum size = 6mm,
                fill=teal!50,
                yshift=(\hiddennum-\inputnum)*5 mm
            ] (Hidden2-\i) at (5,-\i) {\tiny ReLU};
        }

        % Hidden Layer 3
        \foreach \i in {1,...,\hiddennum}
        {
            \node[circle,
                minimum size = 6mm,
                fill=teal!50,
                yshift=(\hiddennum-\inputnum)*5 mm
            ] (Hidden3-\i) at (7.5,-\i) {\tiny ReLU};
        }

        % Output Layer
        \foreach \i in {1,...,\outputnum}
        {
            \node[circle,
                minimum size = 6mm,
                fill=purple!50,
                yshift=(\outputnum-\inputnum)*5 mm
            ] (Output-\i) at (10,-\i) {\tiny$X^\i$};
        }

        % Connect neurons In-Hidden1
        \foreach \i in {1,...,\inputnum}
        {
            \foreach \j in {1,...,\hiddennum}
            {
                \draw[->, shorten >=1pt] (Input-\i) -- (Hidden1-\j);
            }
        }

        % Connect neurons Hidden1-Hidden2
        \foreach \i in {1,...,\hiddennum}
        {
            \foreach \j in {1,...,\hiddennum}
            {
                \draw[->, shorten >=1pt] (Hidden1-\i) -- (Hidden2-\j);
            }
        }

        % Connect neurons Hidden2-Hidden3
        \foreach \i in {1,...,\hiddennum}
        {
            \foreach \j in {1,...,\hiddennum}
            {
                \draw[->, shorten >=1pt] (Hidden2-\i) -- (Hidden3-\j);
            }
        }

        % Connect neurons Hidden3-Out
        \foreach \i in {1,...,\hiddennum}
        {
            \foreach \j in {1,...,\outputnum}
            {
                \draw[->, shorten >=1pt] (Hidden3-\i) -- (Output-\j);
            }
        }

        % Bottom Row with Text
        \node [below=1cm of Input-2, align=center] {
        Input layer \\ dim = 2};
        \node [below=of Hidden1-4, align=center] {1st hid.\ layer \\ dim = 512};
        \node [below=of Hidden2-4, align=center] {2nd hid.\ layer\\ dim = 512};
        \node [below=of Hidden3-4, align=center] {3rd hid.\ layer\\ dim = 512};
        \node [below=1cm of Output-2, align=center] {Output layer\\ dim = 2};

        \node [below=2cm of Hidden2-4, align=center] {\sc\color{blue}\large The generator network};
        \end{tikzpicture}
        \end{tcolorbox}
    \end{minipage}
    \hspace*{10pt}
    \begin{minipage}{0.35\textwidth}
    \begin{tabular}{c|c}
        \toprule
        \textbf{Layer} & \textbf{Operation} \\
        \midrule
        \multirow{ 2}{*}{1} & Linear + ReLU \\
         & $\text{latent\_DIM} \rightarrow \text{DIM}$ \\
        \midrule
        \multirow{ 2}{*}{2} & Linear + ReLU \\
        & $\text{DIM} \rightarrow \text{DIM}$ \\
        \midrule
        \multirow{ 2}{*}{3} & Linear + ReLU \\
        & $\text{DIM} \rightarrow \text{DIM}$ \\
        \midrule
        \multirow{ 2}{*}{Out} & Linear \\
         & $\text{DIM} \rightarrow \text{out\_DIM}$\\
        \bottomrule
    \end{tabular}
    \end{minipage}%

    \bigskip
%    \caption{Neural network architecture for the generator for Swiss Roll dataset. \text{latent\_DIM} = 2, DIM = 512.}
%   \label{tab:generator_toy}
    \begin{minipage}{0.65\textwidth}
        \centering
        \newcommand{\inputnum}{2}
        % Hidden layer neurons' number
        \newcommand{\hiddennum}{5}
        % Output layer neurons' number
        \newcommand{\outputnum}{1}
        \begin{tcolorbox}
        \hspace*{-15pt}
        \begin{tikzpicture}[scale=0.9,font=\footnotesize]
        % Input Layer
        \foreach \i in {1,...,\inputnum}
        {
            \node[circle,
                minimum size = 6mm,
                fill=orange!30] (Input-\i) at (0,-\i) {\tiny $X^\i$};
        }

        % Hidden Layer 1
        \foreach \i in {1,...,\hiddennum}
        {
            \node[circle,
                minimum size = 6mm,
                fill=teal!50,
                yshift=(\hiddennum-\inputnum)*5 mm
            ] (Hidden1-\i) at (2.5,-\i) {\tiny ReLU};
        }

        % Hidden Layer 2
        \foreach \i in {1,...,\hiddennum}
        {
            \node[circle,
                minimum size = 6mm,
                fill=teal!50,
                yshift=(\hiddennum-\inputnum)*5 mm
            ] (Hidden2-\i) at (5,-\i) {\tiny ReLU};
        }

        % Hidden Layer 3
        \foreach \i in {1,...,\hiddennum}
        {
            \node[circle,
                minimum size = 6mm,
                fill=teal!50,
                yshift=(\hiddennum-\inputnum)*5 mm
            ] (Hidden3-\i) at (7.5,-\i) {\tiny ReLU};
        }

        % Output Layer
        \foreach \i in {1,...,\outputnum}
        {
            \node[circle,
                minimum size = 6mm,
                fill=purple!50,
                yshift=(\outputnum-\inputnum)*5 mm
            ] (Output-\i) at (10,-\i) {\tiny$Y$};
        }

        % Connect neurons In-Hidden1
        \foreach \i in {1,...,\inputnum}
        {
            \foreach \j in {1,...,\hiddennum}
            {
                \draw[->, shorten >=1pt] (Input-\i) -- (Hidden1-\j);
            }
        }

        % Connect neurons Hidden1-Hidden2
        \foreach \i in {1,...,\hiddennum}
        {
            \foreach \j in {1,...,\hiddennum}
            {
                \draw[->, shorten >=1pt] (Hidden1-\i) -- (Hidden2-\j);
            }
        }

        % Connect neurons Hidden2-Hidden3
        \foreach \i in {1,...,\hiddennum}
        {
            \foreach \j in {1,...,\hiddennum}
            {
                \draw[->, shorten >=1pt] (Hidden2-\i) -- (Hidden3-\j);
            }
        }

        % Connect neurons Hidden3-Out
        \foreach \i in {1,...,\hiddennum}
        {
            \foreach \j in {1,...,\outputnum}
            {
                \draw[->, shorten >=1pt] (Hidden3-\i) -- (Output-\j);
            }
        }

        % Bottom Row with Text
        \node [below=1cm of Input-2, align=center] {
        Input layer \\ dim = 2};
        \node [below=of Hidden1-4, align=center] {1st hid.\ layer \\ dim = 512};
        \node [below=of Hidden2-4, align=center] {2nd hid.\ layer\\ dim = 512};
        \node [below=of Hidden3-4, align=center] {3rd hid.\ layer\\ dim = 512};
        \node [below=1cm of Output-2, align=center] {Output layer\\ dim = 1};

        \node [below=2cm of Hidden2-4, align=center] {\sc\color{blue}\large The critics network};
        \end{tikzpicture}
        \end{tcolorbox}
    \end{minipage}
    \hspace*{10pt}
    \begin{minipage}{0.35\textwidth}
    \begin{tabular}{c|c}
        \toprule
        \textbf{Layer} & \textbf{Operation} \\
        \midrule
        \multirow{ 2}{*}{1} & Linear + ReLU \\
         & $\text{out\_DIM} \rightarrow \text{DIM}$ \\
        \midrule
        \multirow{ 2}{*}{2} & Linear + ReLU \\
        & $\text{DIM} \rightarrow \text{DIM}$ \\
        \midrule
        \multirow{ 2}{*}{3} & Linear + ReLU \\
        & $\text{DIM} \rightarrow \text{DIM}$ \\
        \midrule
        \multirow{ 2}{*}{Out} & Linear \\
         & $\text{DIM} \rightarrow 1$\\
        \bottomrule
    \end{tabular}
    \end{minipage}%

    \bigskip

    \begin{minipage}{0.65\textwidth}
        \centering
        \newcommand{\inputnum}{2}
        % Hidden layer neurons' number
        \newcommand{\hiddennuma}{5}
        \newcommand{\hiddennumb}{5}
        % Output layer neurons' number
        \newcommand{\outputnum}{2}
        \begin{tcolorbox}
        \hspace*{-15pt}
        \begin{tikzpicture}[scale=0.9,font=\footnotesize]
        % Input Layer
        \foreach \i in {1,...,\inputnum}
        {
            \node[circle,
                minimum size = 6mm,
                fill=orange!30] (Input-\i) at (0,-\i) {\tiny $X^\i$};
        }

        % Hidden Layer 1
        \foreach \i in {1,...,\hiddennuma}
        {
            \node[circle,
                minimum size = 6mm,
                fill=teal!50,
                yshift=(\hiddennuma-\inputnum)*5 mm
            ] (Hidden1-\i) at (2.5,-\i) {\tiny ReLU};
        }

        % Hidden Layer 2
        \foreach \i in {1,...,\hiddennumb}
        {
            \node[circle,
                minimum size = 6mm,
                fill=teal!50,
                yshift=(\hiddennumb-\inputnum)*5 mm
            ] (Hidden2-\i) at (5,-\i) {\tiny ReLU};
        }

        % Hidden Layer 3
        \foreach \i in {1,...,\hiddennumb}
        {
            \node[circle,
                minimum size = 6mm,
                fill=teal!50,
                yshift=(\hiddennumb-\inputnum)*5 mm
            ] (Hidden3-\i) at (7.5,-\i) {\tiny ReLU};
        }

        % Output Layer
        \foreach \i in {1,...,\outputnum}
        {
            \node[circle,
                minimum size = 6mm,
                fill=purple!50,
                yshift=(\outputnum-\inputnum)*5 mm
            ] (Output-\i) at (10,-\i) {\tiny$U^\i$};
        }

        % Connect neurons In-Hidden1
        \foreach \i in {1,...,\inputnum}
        {
            \foreach \j in {1,...,\hiddennuma}
            {
                \draw[->, shorten >=1pt] (Input-\i) -- (Hidden1-\j);
            }
        }

        % Connect neurons Hidden1-Hidden2
        \foreach \i in {1,...,\hiddennuma}
        {
            \foreach \j in {1,...,\hiddennumb}
            {
                \draw[->, shorten >=1pt] (Hidden1-\i) -- (Hidden2-\j);
            }
        }

        % Connect neurons Hidden2-Hidden3
        \foreach \i in {1,...,\hiddennumb}
        {
            \foreach \j in {1,...,\hiddennumb}
            {
                \draw[->, shorten >=1pt] (Hidden2-\i) -- (Hidden3-\j);
            }
        }

        % Connect neurons Hidden3-Out
        \foreach \i in {1,...,\hiddennumb}
        {
            \foreach \j in {1,...,\outputnum}
            {
                \draw[->, shorten >=1pt] (Hidden3-\i) -- (Output-\j);
            }
        }

        % Bottom Row with Text
        \node [below=1cm of Input-2, align=center] {
        Input layer \\ dim = 2};
        \node [below=of Hidden1-4, align=center] {1st hid.\ layer \\ dim = 512};
        \node [below=of Hidden2-4, align=center] {2nd hid.\ layer\\ dim = 512};
        \node [below=of Hidden3-4, align=center] {3rd hid.\ layer\\ dim = 512};
        \node [below=1cm of Output-2, align=center] {Output layer\\ dim = 2};

        \node [below=2.5cm of Hidden2-4, align=center] {\sc\color{blue}\large The left inverse network};
        \end{tikzpicture}
    \end{tcolorbox}
    \end{minipage}
    \hspace{10pt}
    \begin{minipage}{0.35\textwidth}
    \begin{tabular}{c|c}
        \toprule
        \textbf{Layer} & \textbf{Operation} \\
        \midrule
        \multirow{ 2}{*}{1} & Linear + LeakyReLU \\
         & $\text{out\_DIM} \rightarrow \text{DIM}$ \\
        \midrule
        \multirow{ 2}{*}{2} & Linear + LeakyReLU \\
        & $\text{DIM} \rightarrow \text{DIM}$ \\
        \midrule
        \multirow{ 2}{*}{3} & Linear + LeakyReLU \\
        & $\text{DIM} \rightarrow \text{DIM}$ \\
        \midrule
        \multirow{ 2}{*}{Out} & Linear \\
         & $\text{DIM} \rightarrow \text{latent\_DIM}$\\
        \bottomrule
    \end{tabular}
    \end{minipage}%

    \caption{Neural network architectures for the generator $g$, the critic $f$ and the left inverse $h$ used in the experiments conducted on Swiss Roll datasets. In this case, \text{latent\_DIM} = \text{out\_DIM} = 2, DIM = 512.}
    \label{tab:discriminator_toy}
\end{table}

\begin{figure}[h!]


        \centering
        \begin{subfigure}[b]{0.48\textwidth}
            \centering
            \includegraphics[width=0.98\textwidth]{mnist_samples_lambda_0_8x16.pdf}
            \caption[ $\lambda = 0$.]%
            {{\small  $\lambda = 0$.}}
            \label{fig:lambda0}
        \end{subfigure}
        %\hfill
        \begin{subfigure}[b]{0.48\textwidth}
            \centering
            \includegraphics[width=0.98\textwidth]{mnist_samples_lambda_1_8x16.pdf}
            \caption[ $\lambda = 1$.]%
            {{\small  $\lambda = 1$.}}
            \label{fig:lambda1}
        \end{subfigure}

        \vskip\baselineskip

        \begin{subfigure}[b]{0.48\textwidth}
            \centering
            \includegraphics[width=0.98\textwidth]{mnist_samples_lambda_4_8x16.pdf}
            \caption[ $\lambda = 4$.]%
            {{\small $\lambda = 4$.}}
            \label{fig:lambda4}
        \end{subfigure}
        %\hfill
        \begin{subfigure}[b]{0.48\textwidth}
            \centering 
            \includegraphics[width=0.98\textwidth]{mnist_samples_lambda_8_8x16.pdf}
            \caption[ $\lambda = 8$.]%
            {{\small $\lambda = 8$.}}
            \label{fig:lambda8}
        \end{subfigure}
        \vskip\baselineskip

        \caption[ Handwritten digits generated for different values of $\lambda$.]
        {\small Handwritten digits generated by LIPERM for different values of $\lambda$.}
        \label{fig:mnist2}
\end{figure}

\begin{table}[htbp]
  \centering
  \begin{subtable}[b]{\textwidth}
    \centering
      \caption{Discriminator Network Architecture used in our experiments on MNIST data set. }
\vspace{10pt}
 \begin{tabular}{l|c|c|c|c}
    \toprule

      & Conv2d +  &  Conv2d +  & Conv2d +  &\\
     \bf Layer & LeakyReLu &  BatchNorm2d +  & BatchNorm2d +  & Conv2d\\
      & &  LeakyRelu & LeakyRelu & \\
     \midrule
     Input dim         & $28\times 28$ & $14\times 14$ & $7\times 7$ & $3\times 3$\\
     Nb input channels & 1 &  256 & 512 & 1024\\
     Kernel size       & 4 &  4   & 3 & 3\\
     Stride            & 2 &  2   & 2 & 1\\
     Padding           & 1 &  1   & 0 & 0\\
     Output dim        & $14\times 14$ &  $7\times 7$ & $3\times 3$ & $1\times 1$\\
     Nb output channels& 256 &  512 & 1024 & 1\\
     %\midrule
     % \rowcolor{BlanchedAlmond}
     % Negative slope & 0.2 & 0.2 & 0.2 & -\\
     \bottomrule
 \end{tabular}
\vspace{25pt}
  \end{subtable}
  \begin{subtable}[b]{\textwidth}
  \centering
  \caption{Generator Network Architecture used in our experiments on MNIST data set. }
\vspace{10pt}
 \begin{tabular}{l|c|c|c|c}
    \toprule

      & ConvTranspose2d   &  ConvTranspose2d  & ConvTranspose2d  &\\
     \bf Layer & +BatchNorm2d &  +BatchNorm2d  & +BatchNorm2d  & ConvTranspose2d\\
      & +ReLu &  +Relu & +Relu & \\
     \midrule
     Input dim         & $1\times 1$ & $3\times 3$ & $7\times 7$ & $3\times 3$\\
     Nb input channels & 100 &  1024 & 512 & 256\\
     Kernel size       & 3 &  3   & 4 & 4\\
     Stride            & 1 &  2   & 2 & 2\\
     Padding           & 0 &  0   & 1 & 1\\
     Output dim        & $3\times 3$ &  $7\times 7$ & $14\times 14$ & $28\times 28$\\
     Nb output channels& 1024 &  512 & 256 & 1\\
     %\midrule
     \bottomrule
 \end{tabular}
\vspace{25pt}
 \end{subtable}
 \begin{subtable}[b]{\textwidth}
      \centering
  \caption{Inverse Generator Network Architecture used in our experiments on MNIST data set. }
\vspace{10pt}
 \begin{tabular}{l|c|c|c|c|c}
    \toprule

      & Conv2d +  &  Conv2d +  & Conv2d +  & &\\
     \bf Layer & LeakyReLu &  BatchNorm2d +  & BatchNorm2d +  & Conv2d & Linear\\
      & &  LeakyRelu & LeakyRelu & &\\
     \midrule
     Input dim         & $28\times 28$ & $14\times 14$ & $7\times 7$ & $3\times 3$ & 100\\
     Nb input channels & 1 &  256 & 512 & 1024 & 1\\
     Kernel size       & 4 &  4   & 3 & 3 & -\\
     Stride            & 2 &  2   & 2 & 1 & -\\
     Padding           & 1 &  1   & 0 & 0 & -\\
     Output dim        & $14\times 14$ &  $7\times 7$ & $3\times 3$ & $1\times 1$ & 100\\
     Nb output channels& 256 &  512 & 1024 & 100 & 1\\
     %\midrule
     % \rowcolor{BlanchedAlmond}
     % Negative slope & 0.2 & 0.2 & 0.2 & - & -\\
     \bottomrule
 \end{tabular}
 \end{subtable}
 \vspace{10pt}
\caption{Neural network architectures for the generator $g$, the critic $f$, and the
 lest inverse $h$ used in the experiments conducted on MNIST dataset. The negative slope parameter of the LeakyReLU is set to $0.2$.}
 \label{table:MNIST_architecture}
\end{table}

\end{document}